\numberwithin{equation}{section}
\newcommand{\nc}{\newcommand}
\nc{\Qq}{\mathbb{Q}}
\nc{\Cc}{\mathbb{C}}
\nc{\Rr}{\mathbb{R}}
\nc{\Zz}{\mathbb{Z}}
\nc{\Nn}{\mathbb{N}}
\nc{\Pp}{\mathbb{P}}
\nc{\cA}{\mathcal{A}}
\nc{\cP}{\mathcal{P}}
\nc{\cC}{\mathcal{C}}
\nc{\cB}{\mathcal{B}}
\nc{\cE}{\mathcal{E}}
\nc{\cF}{\mathcal{F}}
\nc{\cS}{\mathcal{S}}
\nc{\cH}{\mathcal{H}}
\nc{\cZ}{\mathcal{Z}}
\nc{\cM}{\mathcal{M}}
\nc{\cN}{\mathcal{N}}
\nc{\cI}{\mathcal{I}}
\nc{\cT}{\mathcal{T}}
\nc{\mf}{\mathfrak{m}}
\nc{\ba}{ {\boldsymbol \alpha}}
\nc{\norm}[1]{\| #1 \|} \nc{\gen}[1]{\langle #1 \rangle} 
\nc{\wL}[3]{\det
  \begin{pmatrix} #1 & \cdots & #2 \\
  \partial_L #1 & \cdots & \partial_L #2 \\ \vdots & \phantom{\vdots} &
  \vdots \\ \partial_L^{#3} #1 & \cdots & \partial_L^{#3} #2
\end{pmatrix}}
\nc{\dO}{\partial_{\Omega}}
\nc{\id}{\mathrm{id}}
\nc{\DMO}{\DeclareMathOperator}
\renewcommand{\vec}{\mathbf}
\DMO{\rk}{rank} 
\DMO{\Der}{Der} 
\DMO{\td}{td} 
\DMO{\Log}{Log} 
\DMO{\Exp}{Exp}
\DMO{\supp}{supp}
\DMO{\ord}{ord}
\DMO{\sgn}{sgn}
\DMO{\diag}{diag}
\newtheorem*{thm*}{Jacobian Criterion}
\newtheorem{thm}{Theorem}[section]
\newtheorem{lem}[thm]{Lemma}
\newtheorem{prop}[thm]{Proposition}
\newtheorem{cor}[thm]{Corollary}
\theoremstyle{remark}
\newtheorem{exa}{Example}[section]
\title{Applications of differential algebra to algebraic independence of
arithmetic functions} \author{Wai Yan Pong \\ California State University
Dominguez-Hills}
\begin{document}
\maketitle 
\abstract{We generalize and unify the proofs of several results on algebraic
independence of arithmetic functions and Dirichlet series by a theorem of Ax
on differential Schanuel conjecture. Along the way, we find counter-examples
to some results in the literature.}
 
\section{Introduction} 
\label{sec:Axthm}

Schanuel Conjecture asserts that for any $\Qq$-linearly independent complex
numbers $a_1,\dots, a_n$ there are at least $n$ numbers among
  \begin{align*}
    a_1, \dots,a_n, \exp(a_1),\dots, \exp(a_n)
  \end{align*}
that are algebraically independent over the rational numbers. It is well-known
that a number of remarkable results about transcendental numbers:
Lindemann-Weierstrass Theorem, Gelfond-Schneider Theorem and Baker's Theorem to
name a few are consequences of this statement. For the state of the art on this
topic, we refer the reader to Waldschmidt's paper~\cite{miw}. In this article,
we argue that Schanuel's insight remains valid for arithmetic functions.  We
improve several existing results on algebraic independence of arithmetic
functions by applying an analog of Schanuel Conjecture for differential rings.
More precisely, we deduce them from the following theorem of James
Ax~\cite[Theorem~3]{ax}:
\begin{thm}
  \label{th:genax} Let $F/C/\Qq$ be a tower of fields. Suppose $\Delta$
  is a set of derivations of $F$ with $\bigcap_{D \in \Delta} \ker D =
  C$. Let $y_1,\dots, y_n$, $z_1,\dots, z_n \in F^{\times}$ be such that
  \begin{enumerate}
    \item[\upshape{(a)}] \label{i:ex-log} for all $D \in \Delta$, $i =
    1,\dots,n$, $Dy_i= Dz_i/z_i$ and either
    \item[\upshape{(b)}] \label{i:nontrivial} no non-trivial power product
  of the $z_j$ is in $C$, or
    \item[\upshape{(b$^\prime$)}] \label{i:modulo} the $y_i$ are $\Qq$-linearly
    independent modulo $C$.
  \end{enumerate}
  Then
  \begin{align*}
    \td_C C(y_1,\dots, y_n, z_1,\dots, z_n) \ge n + \rk\left( Dy_i
    \right)_{\begin{smallmatrix} & D \in \Delta \\ &1 \le i \le n
    \end{smallmatrix}}.
  \end{align*}
\end{thm}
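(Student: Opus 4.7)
The plan is to argue by contradiction. Assume
\[
\td_C K \le n+r-1, \qquad K := C(y_1,\dots,y_n,z_1,\dots,z_n),
\]
with $r := \rk(Dy_i)_{D,i}$. After restricting every $D \in \Delta$ to $K$ and replacing $\Delta$ by a finite subset of size $r$ realizing the rank, all hypotheses carry over and one may work entirely inside $K$.

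The first half of the argument is a linear-algebra count in the K\"ahler differentials $\Omega := \Omega_{K/C}$, a $K$-vector space of dimension $\td_C K$. Through the canonical pairing, each $D \in \Delta$ becomes a $K$-linear functional $\Omega \to K$, and hypothesis (a) translates exactly to the statement that the forms $\omega_i := dy_i - dz_i/z_i$ lie in the common annihilator $W := \bigcap_{D \in \Delta} \ker(D|_\Omega)$. Since $(Dy_i)_{D,i}$ has rank $r$, the $K$-span of $\Delta$ inside $\operatorname{Hom}_K(\Omega,K)$ has dimension $\ge r$, as witnessed on the subspace $\langle dy_1,\dots,dy_n\rangle \subseteq \Omega$; rank-nullity then gives $\dim_K W \le \td_C K - r \le n-1$. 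Thus the $n$ forms $\omega_1,\dots,\omega_n$ admit a non-trivial $K$-linear dependence $\sum_{i \in S} c_i \omega_i = 0$.

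The second half---and the real content of Ax's theorem---is converting this dependence into a contradiction with (b) or (b$^\prime$). I would take a minimal such dependence and differentiate it against further elements of $\Der_C(K)$: because each $\omega_i$ is the logarithmic differential of $z_i$ shifted by $dy_i$, the bookkeeping is rigid enough to force the ratios $c_i/c_j$ to be killed by every $C$-derivation of $K$, hence to lie in $C$, and ultimately in $\Qq$. This produces $a_1,\dots,a_n \in \Qq$, not all zero, with $\sum a_i dy_i = \sum a_i dz_i/z_i$; clearing denominators to integers $n_i$, one has $d\bigl(\sum n_i y_i\bigr) = d\bigl(\prod z_i^{n_i}\bigr)/\prod z_i^{n_i}$. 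The main obstacle is the final extraction: I expect to argue, via the structure of $\Omega_{K/C}$ in characteristic zero combined with a subsidiary induction on $n$, that such an identity forces either $\prod z_i^{n_i} \in C$ (violating (b)) or $\sum n_i y_i \in C$ (violating (b$^\prime$)). This extraction step is where the theorem really has content; the K\"ahler-differential identity alone is insufficient.
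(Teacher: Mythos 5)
The paper does not prove Theorem~\ref{th:genax}; it is quoted from Ax~\cite[Theorem~3]{ax} and used as a black box, so there is no in-paper proof to compare against. Your first half (the dimension count in $\Omega_{K/C}$) is essentially right, modulo two small points: the $D\in\Delta$ send $K$ into $F$ rather than into $K$, so one should tensor $\Omega_{K/C}$ up to $F$ before counting; and the proposed reduction to a finite $\Delta'$ of size $r$ is unnecessary and risks enlarging $C$, which could break hypothesis (b$^\prime$) --- you can simply keep all of $\Delta$ throughout.

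The genuine gap is in the second half, at the point you yourself flag as "the real content." The assertion that a minimal dependence $\sum c_i\omega_i=0$ forces $c_i/c_j\in\Qq$ is not what the formalism gives, and "the bookkeeping is rigid enough" is not an argument. What the de~Rham structure actually yields is weaker: each $\omega_i=dy_i-dz_i/z_i$ is a closed $1$-form, so applying the exterior derivative to a minimal relation gives $\sum dc_i\wedge\omega_i=0$, which (by $K$-independence of the surviving $\omega_i$) forces each $dc_i$ into their $K$-span, hence into $W$, hence $Dc_i=0$ for every $D\in\Delta$, hence $c_i\in C$ --- but only $C$, not $\Qq$. From $C$-coefficients one gets $du=\sum c_i\,dz_i/z_i$ with $u=\sum c_i y_i\in K$, and extracting a violation of (b) or (b$^\prime$) from this still requires a genuinely nonformal input about logarithmic differentials on function fields: that $dz/z$ has integer residues at every place while exact forms have none, so that a $\Qq$-basis argument on the $\Qq$-span of the $c_i$ produces a nontrivial integer power product of the $z_i$ lying in $C$. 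That Rosenlicht/Kolchin-style residue lemma --- essentially Ax's own intermediate results en route to his Theorem~3 --- is where the theorem actually lives; your proposal names the conclusion it should produce but supplies no mechanism. As written, the chain from the $K$-linear dependence among the $\omega_i$ to the desired contradiction is broken at this step.
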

A word about terminology. Let $G$ be an abelian group (written
multiplicatively). We say that $g_1,\ldots, g_n \in G$ are (or the family
$g_1,\ldots, g_n$ is) {\bf multiplicatively independent} if the equation
$g_1^{k_1}\cdots g_n^{k_n}=1$ implies the integers $k_1,\ldots, k_n$ are
all zeros. The implication is vacuously true for the empty family and so
it is multiplicatively independent. A subset $X$ of $G$ is multiplicatively
independent if every finite family of $X$ is. For $H$ a subgroup of $G$, we
say that $X$ is {\bf multiplicatively independent modulo $H$}, if the image
of $X$ in the quotient group $G/H$ is multiplicatively independent. We will
use these terminologies throughout and first, let us restate Condition~(b)
in Theorem~\ref{th:genax} as ``the $z_i$ are multiplicatively independent
modulo $C^{\times}$''. We prefer doing so because that draws a closer analogy
between Condition~(b) and~(b').

\section{Arithmetic Functions} \label{sec:arithfun} 

In this section we introduce the notations and summarize the facts about
arithmetic functions that we will use subsequently. The reader can
consult~\cite[Chapter~2]{ant} and~\cite[Chapter~4]{hint} for more information.
We use $\Pp$ to denote the set of primes and $p$ will always stand for a prime
in this article.

{\bf Arithmetic functions} are complex-valued functions with domain the set of
natural numbers. It is beneficial at times to think of them as functions on
$\Rr$ vanishing at points that are not natural numbers. Arithmetic functions
form a commutative ring $\cA$ under pointwise addition of functions $+$
and convolution product $*$ defined as:
\begin{align*}
  (f * g)(n) = \sum_{d|n} f(d)g\left( \frac{n}{d} \right).
\end{align*}
Identifying $\alpha \in \Cc$ with the function $1 \mapsto \alpha, n \mapsto 0\
(n >1)$ turns $\cA$ into a $\Cc$-algebra. Under this identification $0$ and $1$
become the neutral elements for $+$ and $*$, respectively. For $A \subseteq
\Nn$, we use $\vec{1}_{A}$ to denote the {\bf indicator function of $A$}, i.e.
$\vec{1}_{A}(k) = 1$ if $k \in A$; and $\vec{1}_A(k) = 0$ otherwise. We
write $\vec{1}$ for $\vec{1}_{\Nn}$, $\vec{1}_p$ for $\vec{1}_{\{p^k \colon
k \ge 0\}}$ and $e_n$ for $\vec{1}_{\{n\}}$ ($n \in \Nn$). Since most of
the time we will consider the convolution product, we often simply write
$fg$ for $f*g$ and $f^k$ ($k \in \Nn$) for the $k$-th power of $f$ with
respect to the convolution product. For a nonzero arithmetic function $f$,
$f^0$ is understood to be $1$. Unless otherwise stated, by $\cA$ we mean the
$\Cc$-algebra $(\cA,+,*)$. However, we do also consider the structure $(\cA,
+, \cdot$) where $\cdot$ is the pointwise multiplication of functions. This
structure is also a $\Cc$-algebra but this time $\alpha \in \Cc$ is identified
with the constant function $n \mapsto \alpha$ $(n \ge 1)$.

For $k \in \Nn$, let $\varepsilon_k$ be the {\bf $k$-th coordinate map}, i.e.
$\varepsilon_k(f) = f(k)$ ($f \in \cA$).  Among the coordinate maps only
$\varepsilon := \varepsilon_1$ is a $\Cc$-algebra homomorphism from $\cA$
to $\Cc$. For $X \subseteq \Cc$, let
\begin{align*}
  \cA_{X} = \varepsilon^{-1}(X) = \{f \in \cA \colon f(1) \in X\}.
\end{align*}
We write $\cA_{\alpha}$ for $\cA_{\{\alpha\}}$. One sees that $\cA_0$ is the
unique maximal ideal of $\cA$ by checking that its complement is the group
of units of $\cA$.

The {\bf support} of an arithmetic function $f$, denoted by $\supp f$, is the
set of natural numbers $n$ such that $f(n) \neq 0$.  The {\bf order} of $f$,
denoted by $v(f)$, is the least element of its support if $f \neq 0$ and
is $\infty$ if $f = 0$. A {\bf prime divisor} of a set of natural numbers
$A$ is a prime that divides some member of $A$. Following the notation
in~\cite{aids}, we use $[A]$ to denote the set of prime divisors of $A$.
We say that $A$ is {\bf (multiplicatively) finitely generated} if $[A]$ is
finite. We use $\cT$ and $\cS$ to denote the subalgebras of $\cA$ consisting
of arithmetic functions with finite support and finitely generated support,
respectively. Note that $\cT$ is the $\Cc$-subalgebra of $\cS$ generated by
the $e_n$ ($n \in \Nn$).

\begin{lem}
  \label{l:prod_order} Let $f_1,\ldots, f_n \in \cA$ and $a_1,\ldots a_n$ be
  real numbers such that $0 < a_i \le v(f_i)$ for each $1 \le i \le n$. Then
  \begin{align}
    \label{eq:eval} (f_1*\cdots *f_n)\left( \prod_{i=1}^n a_i \right) =
    \prod_{i=1}^n f_i(a_i).
  \end{align}
\end{lem}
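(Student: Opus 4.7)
The plan is to expand the left-hand side as a sum over natural-number factorizations of $N := a_1 \cdots a_n$ and exploit the hypothesis $a_i \le v(f_i)$ to show that at most one term survives. First I would dispose of the degenerate case where some $f_i = 0$: then $v(f_i) = \infty$, the LHS vanishes because convolution with $0$ is $0$, and the RHS vanishes because one of its factors is $0$. So I may henceforth assume every $f_i$ is nonzero and every $v(f_i)$ is a (finite) positive integer.

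Next I would invoke the iterated convolution formula together with the convention that arithmetic functions, viewed as functions on $\Rr$, vanish off $\Nn$ to rewrite
\begin{align*}
  (f_1 * \cdots * f_n)(N) \;=\; \sum_{\substack{d_1,\ldots,d_n \in \Nn \\ d_1 \cdots d_n = N}} \prod_{i=1}^{n} f_i(d_i),
\end{align*}
the sum being empty (hence zero) when $N \notin \Nn$. A term is nonzero only when $d_i \in \supp f_i$ for each $i$, which forces $d_i \ge v(f_i)$; combining this with $v(f_i) \ge a_i$ yields
\begin{align*}
  N = d_1 \cdots d_n \;\ge\; v(f_1) \cdots v(f_n) \;\ge\; a_1 \cdots a_n = N.
\end{align*}
Since all quantities are positive, both inequalities must be equalities, which forces $d_i = v(f_i) = a_i$ for every $i$.

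To finish I would split into cases. If $a_i = v(f_i)$ for every $i$, exactly one factorization survives and contributes $\prod_i f_i(v(f_i)) = \prod_i f_i(a_i)$, matching the RHS. If instead some $a_i < v(f_i)$ strictly, then the displayed chain of inequalities cannot collapse into equalities, so no factorization contributes and the LHS is $0$; but then $f_i(a_i) = 0$ as well (either because $a_i \notin \Nn$, or because $a_i$ is a natural number lying strictly below $v(f_i)$), so the RHS also vanishes. I expect the main obstacle---though it is more of a bookkeeping annoyance than a real difficulty---to be that the $a_i$ are real rather than integral, so one must carefully remember that each $f_i$ is treated as a function on $\Rr$ vanishing off $\Nn$ and that $N = \prod a_i$ may itself fail to be a natural number; both eventualities are absorbed uniformly by the convolution-sum argument above.
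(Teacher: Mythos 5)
Your proof is correct and takes essentially the same route as the paper's: reduce to the nonzero case, expand the convolution at $N=\prod a_i$ as a sum over natural-number factorizations, observe that the support condition $d_i\ge v(f_i)\ge a_i$ forces $d_i=v(f_i)=a_i$ for any surviving term, and handle the case where some $a_i<v(f_i)$ by noting both sides vanish. The only difference is that you spell out the chain of inequalities and the empty-sum convention a bit more explicitly, which is harmless.
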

\begin{proof}
   First, if some $f_i = 0$, then both sides of~\eqref{eq:eval} are 0. So
   let us assume the order of each $f_i$ is finite. For $a \in \Rr$, we
   have
   \begin{align}
     \label{eq:sum} (f_1*\cdots * f_n)(a) = \sum_{\begin{smallmatrix}
     d_1\cdots d_n=a \\ d_i
       \in \Nn \end{smallmatrix}} f_1(d_1)\cdots f_n(d_n).
   \end{align}
   The summand $f_1(d_1)\cdots f_n(d_n)$ appears in~\eqref{eq:sum} can be
   nonzero only if $d_i \ge v(f_i) (\ge a_i)$ for each $i$. So by taking $a =
   a_1\cdots a_n$, we see that $f_1(d_1)\cdots f_n(d_n) \neq 0$ if and only
   if $d_i = v(f_i) = a_i$ for each $i$. Thus either $a_i < v(f_i)$ for some
   $i$, in that case both sides of~\eqref{eq:eval} are zero, or else $a_i =
   v(f_i)$ for each $i$, in that case both sides of~\eqref{eq:eval} equal
   $f_1(v(f_1))\cdots f_n(v(f_n))$.
\end{proof}

\begin{prop}
  \label{p:det_val} Let $f_{ij}$ be arithmetic functions $(1 \le i,j \le n)$.
  Suppose $a_i,b_i$ $(1 \le i \le n)$ are positive real numbers such that
  $a_ib_j \le v(f_{ij})$ for $1 \le i,j\le n$. Then
  \begin{align}
    \label{eq:det_val}
    \det\left(f_{ij}\right)\left( \prod_{k=1}^n a_kb_k \right) = \det\left(
    f_{ij}(a_ib_j) \right).
  \end{align}
\end{prop}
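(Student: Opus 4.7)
The plan is to expand the left-hand side determinant via the Leibniz formula (in the convolution algebra) and apply Lemma~\ref{l:prod_order} term by term.

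First I would write
\begin{align*}
  \det(f_{ij}) = \sum_{\sigma \in S_n} \sgn(\sigma)\, f_{1\sigma(1)} * f_{2\sigma(2)} * \cdots * f_{n\sigma(n)},
\end{align*}
evaluate at $N := \prod_{k=1}^n a_k b_k$, and use linearity of evaluation to bring the sum and signs outside. Next, for each fixed $\sigma$, I would apply Lemma~\ref{l:prod_order} to the $n$-tuple of functions $f_{1\sigma(1)}, \ldots, f_{n\sigma(n)}$ together with the positive reals $a_1 b_{\sigma(1)}, a_2 b_{\sigma(2)}, \ldots, a_n b_{\sigma(n)}$. The hypothesis $a_i b_j \le v(f_{ij})$ specializes to $a_i b_{\sigma(i)} \le v(f_{i\sigma(i)})$, which is exactly what the lemma requires. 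Since $\prod_i a_i b_{\sigma(i)} = \prod_k a_k b_k = N$, the lemma yields
\begin{align*}
  (f_{1\sigma(1)} * \cdots * f_{n\sigma(n)})(N) = \prod_{i=1}^n f_{i\sigma(i)}(a_i b_{\sigma(i)}).
\end{align*}

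Finally, I would collect these equalities over $\sigma$ and recognize the resulting sum as the Leibniz expansion of the numerical determinant:
\begin{align*}
  \sum_{\sigma \in S_n} \sgn(\sigma) \prod_{i=1}^n f_{i\sigma(i)}(a_i b_{\sigma(i)}) = \det(f_{ij}(a_i b_j)),
\end{align*}
since the $(i, j)$-entry of the evaluated matrix is $f_{ij}(a_i b_j)$ and the $i$-th factor in the $\sigma$-term has second argument $a_i b_{\sigma(i)}$, precisely as in the permutation expansion of that determinant.

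There is no real obstacle here; the only point that needs a moment of care is matching indices so that the condition $a_i b_j \le v(f_{ij})$ applies uniformly across all permutations $\sigma$ (the hypothesis is stated for all pairs $(i,j)$, so it covers every $(i, \sigma(i))$ simultaneously), and that the product $\prod_i a_i b_{\sigma(i)}$ is independent of $\sigma$. Both are immediate, so the proof reduces to bookkeeping around the Leibniz formula.
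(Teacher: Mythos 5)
Your argument is correct and follows the paper's proof essentially verbatim: expand $\det(f_{ij})$ by the Leibniz formula over permutations, apply Lemma~\ref{l:prod_order} to each product $f_{1\sigma(1)}*\cdots*f_{n\sigma(n)}$ with the reals $a_i b_{\sigma(i)}$, use $\prod_i a_i b_{\sigma(i)} = \prod_k a_k b_k$, and reassemble the resulting sum as $\det(f_{ij}(a_i b_j))$. No gaps.
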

\begin{proof}
  For each permutation $\xi$ of $\{1,\dots, n\}$, by Lemma~\ref{l:prod_order}
  we have
   \begin{align*}
     \left(\sgn(\xi)\prod_{k=1}^n f_{k\xi(k)} \right)\left( \prod_{k=1}^n
     a_kb_k \right) &= \left(\sgn(\xi)\prod_{k=1}^n f_{k\xi(k)} \right)\left(
     \prod_{k=1}^n a_kb_{\xi(k)} \right)\\ &=\sgn(\xi)\prod_{k=1}^n
     f_{k\xi(k)}(a_kb_{\xi(k)}).
  \end{align*}
  Equation~\eqref{eq:det_val} now follows by summing through the permutations.
\end{proof}

Let $\norm{f}$ denote the reciprocal of $v(f)$ with the convention $1/\infty
=0$. The assignment $f \mapsto \norm{f}$ is a non-archimedean norm on $\cA$.
In particular, $\norm{f*g} = \norm{f}\norm{g}$ and consequently $\cA$ is an
integral domain. The ring operations of $\cA$ are continuous with respect to
the (ultra)-metric induced by this norm. A sequence $(f_n)$ of arithmetic
functions converges to an arithmetic function $f$, written as $f_n \to f$,
if and only if the sequence of rational numbers $(\norm{f_n -f})_n$ converges
to $0$. Note also that a map from $\cA$ to itself is continuous if and only
if it preserves convergence of sequences. Since the norm under consideration
is non-archimedean, the series $\sum_k^\infty f_k$ converges if and only if
$f_k \to 0$. In particular, for any formal power series $\sum \alpha_k X^k$
over $\Cc$ and $g \in \cA$, the series $\sum \alpha_k g^k$ converges if and
only if $\norm{g} < 1$ or equivalently $g \in \cA_0$.

The map defined by
\begin{align*}
  f \longmapsto \Exp(f) = \sum_{k =0}^\infty \frac{f^k}{k!}
\end{align*}
is a continuous isomorphism of groups from $(\cA_0, +)$ to $(\cA_1,
*)$~\cite[Theorem~2.20]{ant}. We extend it to the {\bf exponential map}
on $\cA$ by,
\begin{align*}
 f \longmapsto \exp(f(1))*\Exp(f-f(1))
\end{align*}
where $\exp$ denotes the exponential map of $\Cc$. This extension is still a
continuous group homomorphism from $(\cA,+)$ to $(\cA^{\times}, *)$ but no
longer injective since it extends the complex exponentiation. However, its
restriction to $\cA_{\Rr}$, as shown by Rearick in~\cite{rearick}, is indeed a
continuous group isomorphism from $(\cA_{\Rr},+)$ to $(\cA_+,*)$ where $\cA_+$
is the inverse image of the set of positive reals under $\varepsilon$. The
inverse of this group isomorphism, known as the {\bf Rearick logarithm},
is also continuous and we denote it by $\Log$. For convenience, we understand
$\Exp^0 = \Log^0$ as the identity map of $\cA$; and for $k \ge 1$, $\Exp^{-k} =
\Log^k$. For any $f \in \cA$, there is a largest $k \ge 0$ such that $\Log^k f$
is defined: $k=0$ if $f \notin \cA_{+}$, otherwise $k \ge 1$ is the integer
such that $\log^{k}(f(1)) \le 0$ (here $\log$ is the real logarithm). For a
nonempty $W \subseteq \cA$, let $k_W$ be the largest non-negative integer,
such that $\Log^{k_W} f$ is defined for each $f \in W$. We write $\Exp^* W$
for the set
\begin{align*}
  \{\Exp^m f \colon f \in W, m \ge -k_W\}.
\end{align*}

The ring of arithmetic functions is isomorphic, as a $\Cc$-algebra, to the ring
of formal Dirichlet series~\cite[\S~4.6]{hint} via
\begin{align}
  \label{eq:isods} f \longleftrightarrow F(s) = \sum_{n \in \Nn}
  \frac{f(n)}{n^s}.
\end{align}
Under this isomorphism $\vec{1}$ is identified with $\sum 1/n^s$ the Dirichlet
series of the Riemann zeta function $\zeta(s)$. In general, for $A \subseteq
\Nn$, $\vec{1}_A$ is identified with the Dirichlet series $\sum_{n \in \Nn}
\vec{1}_A(n)/n^s$ which converges on a proper right half-plane and extends
to a meromorphic function on $\Cc$. We call this function the {\bf zeta
function of $A$} and denote it by $\zeta_A(s)$.

The ring of arithmetic functions is also isomorphic, as a $\Cc$-algebra,
to the formal power series ring over $\Cc$ in countably many variables $t_p$
($p \in \Pp$) via
\begin{align}
  \label{eq:isops} f \longleftrightarrow F(\vec{t}) = \sum_{n \in \Nn}
  f(n)\prod_p t_p^{v_p(n)},
\end{align}
where $v_p(m)$ is the exponent of $p$ in the prime factorization of $m$. Under
this isomorphism $e_p$ is mapped to the variable $t_p$. The isomorphism
in~\eqref{eq:isops} was utilized by Cashwell and Everett in~\cite{rntf}
to show that $\cA$ is a unique factorization domain.

By a {\bf derivation} of $\cA$ we mean a $\Cc$-linear map from $\cA$ to itself
satisfying the Leibniz rule: $D(f*g) = Df*g + f*Dg$. For simplicity, we do
not distinguish by notation a derivation of $\cA$ and its unique extension to,
$\cF$, the field of fractions of $\cA$. Let $\Delta$ be a set of derivations
of $\cA$. By the {\bf kernel} of $\Delta$, written as $\ker \Delta$, we mean
the intersection of the kernels of its members. By $\ker_{\cF} \Delta$ we
mean the same but regard the members of $\Delta$ as derivations of $\cF$. In
particular, $\ker \emptyset$ and $\ker_{\cF} \emptyset$ are $\cA$ and $\cF$,
respectively. It is routine to check that $\ker_{\cF} \Delta$ is a subfield
of $\cF$ extending $\Cc$ whose intersection with $\cA$ is $\ker \Delta$.

The {\bf log-derivation} of $\cA$, denoted by $\partial_L$, is the map
sending $f \in \cA$ to the arithmetic function defined by
\begin{align*}
  (\partial_L f)(n) = \log(n)f(n).
\end{align*}
Under the isomorphism in~\eqref{eq:isods} $\partial_L$ corresponds to the
derivation $-d/ds$. For each prime $p$, the {\bf $p$-basic derivation}
of $\cA$, denoted by $\partial_p$, is the map sending $f \in \cA$ to the
arithmetic function defined by
\begin{align*}
  (\partial_p f)(n) = f(np)v_p(np).
\end{align*}
Under the isomorphism in~\eqref{eq:isops} $\partial_p$ corresponds to
$\partial/\partial t_p$ the partial derivation with respect to $t_p$. A
derivation of $\cA$ is {\bf basic} if it is $\partial_p$ for some $p$. The
kernel of $\partial_L$ is $\Cc$ and the kernel of $\partial_p$ consists of
arithmetic functions that vanish on the multiples of $p$. In other words,
\begin{align}
  \label{eq:supp_ker}
  f \in \ker \partial_p \iff p \notin [\supp f].
\end{align}
Thus the kernel of the set of basic derivations is also $\Cc$. Basic derivations
and the log-derivation are continuous. For a nice characterization of continuous
derivations of $\cA$, we refer the reader to~\cite[Section~4]{conv}. We consider
continuous derivations because the derivative of a power series with respect to
a continuous derivation can be computed term-by-term:

\begin{lem}
  \label{l:contd} Suppose $D$ is a continuous derivation of $\cA$ and $g \in
  \cA_0$. Then for any formal power series $\sum_{k=0}^{\infty} \alpha_k X^k$
  over $\Cc$,
  \begin{align*}
    D\left( \sum_{k=0}^\infty \alpha_k g^k \right) = \left(\sum_{k=1}^{\infty}
    ka_kg^{k-1}\right)* Dg.
  \end{align*}
\end{lem}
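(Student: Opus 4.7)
The plan is to reduce the statement to a finite computation plus a limit argument, using the continuity of $D$ together with the completeness of $\cA$ under $\norm{\,\cdot\,}$.

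First I would handle the finite case. For each $N \ge 0$ let $S_N = \sum_{k=0}^N \alpha_k g^k$. Iterating the Leibniz rule gives $D(g^k) = k\,g^{k-1} * Dg$ for every $k \ge 1$, and since $D$ is $\Cc$-linear (so kills constants and commutes with scalars) one obtains the polynomial identity
\begin{align*}
  D(S_N) = \left(\sum_{k=1}^N k\alpha_k g^{k-1}\right) * Dg.
\end{align*}
This step is purely algebraic and presents no difficulty.

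Next I would pass to the limit. Because $g \in \cA_0$ we have $\norm{g} < 1$, so $\norm{\alpha_k g^k} \le \norm{g}^k \to 0$ (using that $\norm{\,\cdot\,}$ is unchanged under multiplication by a nonzero complex scalar), hence $S_N \to \sum_{k=0}^\infty \alpha_k g^k$. The continuity of $D$ then yields $D(S_N) \to D(\sum_{k=0}^\infty \alpha_k g^k)$. Similarly, $\norm{k\alpha_k g^{k-1}} \le \norm{g}^{k-1} \to 0$, so the ``formal derivative'' $T_N := \sum_{k=1}^N k\alpha_k g^{k-1}$ converges to some $T \in \cA$, namely $T = \sum_{k=1}^\infty k\alpha_k g^{k-1}$. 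Continuity of convolution (a consequence of $\norm{\,\cdot\,}$ being multiplicative) then gives $T_N * Dg \to T * Dg$. Combining the two limits with the finite identity above produces the desired equality.

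The only step that requires any care is verifying that the derivative series $\sum k\alpha_k g^{k-1}$ actually converges in $\cA$, but this is immediate from the non-archimedean criterion: convergence is equivalent to the terms going to $0$ in norm, and this follows at once from $\norm{g}<1$ combined with the scalar-invariance of $\norm{\,\cdot\,}$. So there is no real obstacle; the content of the lemma is essentially that term-by-term differentiation is legitimate, which is exactly what continuity of $D$ and of multiplication guarantee.
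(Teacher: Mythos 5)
Your proof is correct and follows essentially the same route as the paper's: establish the finite identity $D(S_N) = \bigl(\sum_{k=1}^N k\alpha_k g^{k-1}\bigr)*Dg$ algebraically, then pass to the limit using continuity of $D$ on the left and continuity of convolution (together with convergence of the differentiated series, which you spell out a bit more explicitly than the paper does) on the right, concluding by uniqueness of limits.
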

\begin{proof}
  Since $D$ is $\Cc$-linear and satisfies the Leibniz rule, for each $n
  \in \Nn$,
  \begin{align}
    \label{eq:partial_sum} D\left( \sum_{k=0}^n \alpha_k g^k \right) =
    \left(\sum_{k=1}^n k\alpha_k g^{k-1}\right)*Dg.
  \end{align}
  The left-side of~\eqref{eq:partial_sum} converges to $D\left(
  \sum_{k=0}^\infty \alpha_kg^k \right)$ by continuity of $D$. Since $g
  \in \cA_0$ and the convolution product is continuous, the right-side
  of~\eqref{eq:partial_sum} converges to $\left( \sum_{k=1}^\infty
  k\alpha_kg^{k-1}\right)*Dg$. The lemma now follows from the uniqueness
  of limits.
\end{proof}

\begin{prop}
  \label{p:Dlog} For any continuous derivation $D$ of $\cA$ and $f \in \cA$,
  $D(\Exp(f)) = \Exp(f)*Df$.
\end{prop}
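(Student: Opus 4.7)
The plan is to apply Lemma~\ref{l:contd} directly to the series defining $\Exp$. Assume first that $f \in \cA_0$, so that $\Exp(f) = \sum_{k \ge 0} f^k/k!$ converges. Taking $\alpha_k = 1/k!$ and $g = f$ in Lemma~\ref{l:contd}, I get
\begin{align*}
  D(\Exp(f)) = \left(\sum_{k \ge 1} \frac{k}{k!}\, f^{k-1}\right) * Df.
\end{align*}
Simplifying $k/k! = 1/(k-1)!$ and re-indexing by $j = k-1$ identifies the first factor as $\sum_{j \ge 0} f^j/j! = \Exp(f)$, which yields the claim.

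For a general $f \in \cA$, interpret $\Exp(f)$ via the extension $\exp(f(1)) * \Exp(f - f(1))$ introduced above. Since $\exp(f(1)) \in \Cc$ is a constant and $D$ is $\Cc$-linear, $D$ annihilates it; the Leibniz rule then reduces the computation to the $\cA_0$-case applied to $g = f - f(1)$, and because $Dg = Df$ (again by $\Cc$-linearity and $f(1) \in \Cc$), the identity $D(\Exp(f)) = \Exp(f) * Df$ persists.

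The only technical point is the termwise differentiation of the exponential series, which is exactly what Lemma~\ref{l:contd} was set up to license; beyond that the argument is a one-line manipulation of factorials. I do not anticipate any real obstacle.
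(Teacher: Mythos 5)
Your proof is correct and follows essentially the same route as the paper's: apply Lemma~\ref{l:contd} to the exponential power series to handle $f \in \cA_0$, then use the decomposition $\Exp(f) = \exp(f(1)) * \Exp(f - f(1))$, the fact that $\ker D \supseteq \Cc$, and the Leibniz rule to extend to general $f$.
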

\begin{proof}
  By applying Lemma~\ref{l:contd} to the series $\sum_{k=0}^{\infty} X^k/k!$
  we conclude that $D\Exp(f) = \Exp(f)*Df$ for any $f \in \cA_0$. In general,
  since $\ker D \supseteq \Cc$, it follows that for $f \in \cA$,
   \begin{align*}
     D\Exp(f) &= D(\exp(f(1))*\Exp(f-f(1)))\\ &= \exp(f(1))*D(\Exp(f-f(1)))\\
     &= \exp(f(1))*\Exp(f-f(1))*D(f-f(1))\\ &= \Exp(f)*Df.
   \end{align*}
\end{proof}

\begin{cor}
  \label{c:EL-KerD} Suppose $\Delta$ is a set of continuous derivations of
  $\cA$. Then $f \in \ker\Delta$ if and only if $\Exp(f) \in \ker\Delta$.
  Moreover, if $f \in \cA_{+}$ then $f \in \ker \Delta$ if and only if $\Log
  f \in \ker\Delta$.
\end{cor}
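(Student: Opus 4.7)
The plan is to deduce both equivalences directly from Proposition~\ref{p:Dlog}, with the only subtle point being to invoke that $\cA$ is an integral domain (hence a cancellation law applies) and that $\Exp(f)$ is always a unit.

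For the first equivalence, fix $D \in \Delta$ and observe that by Proposition~\ref{p:Dlog} we have
\begin{align*}
  D(\Exp(f)) = \Exp(f) * Df.
\end{align*}
If $f \in \ker \Delta$ then $Df = 0$ for every $D \in \Delta$, so each $D(\Exp(f)) = 0$ and hence $\Exp(f) \in \ker \Delta$. Conversely, if $\Exp(f) \in \ker \Delta$ then $\Exp(f) * Df = 0$ for every $D \in \Delta$; since $\Exp(f)(1) = \exp(f(1)) \neq 0$ the element $\Exp(f)$ is a unit of $\cA$ (equivalently, $\cA$ is an integral domain and $\Exp(f) \neq 0$), so we may cancel to conclude $Df = 0$, i.e.\ $f \in \ker \Delta$.

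For the second equivalence, assume $f \in \cA_+$. Then $\Log f \in \cA_\Rr \subseteq \cA$ is well-defined and $\Exp(\Log f) = f$ because $\Log$ is the inverse of the restriction of $\Exp$ to $\cA_\Rr \to \cA_+$ (via Rearick's isomorphism). Applying the first part with $\Log f$ in place of $f$ yields $\Log f \in \ker \Delta$ if and only if $\Exp(\Log f) = f \in \ker \Delta$, as desired.

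There is no real obstacle here: the content is entirely carried by Proposition~\ref{p:Dlog}. The only things to verify carefully are that $\Exp(f)$ is a unit so cancellation in $\cA$ applies, and that the second statement can be reduced to the first by passing through $\Log f$, which is legitimate precisely because $f \in \cA_+$ ensures $\Log f$ exists.
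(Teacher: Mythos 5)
Your proof is correct and follows the paper's own argument exactly: both rely on Proposition~\ref{p:Dlog} to get $D(\Exp(f)) = \Exp(f)*Df$, use the fact that $\Exp(f)\neq 0$ in the integral domain $\cA$ to cancel, and reduce the $\Log$ statement to the $\Exp$ statement via $f = \Exp(\Log f)$ for $f \in \cA_+$. The only difference is that you spell out the cancellation and unit observations slightly more explicitly than the paper does.
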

\begin{proof}
  By Proposition~\ref{p:Dlog}, $D(\Exp(f)) = Df * \Exp(f)$ for any $D \in
  \Delta$. Since $\Exp(f) \neq 0$, the first assertion follows. The second
  assertion follows from the first because for $f \in \cA_{+}$, $f =
  \Exp(\Log(f))$.
\end{proof}

\begin{prop}
  \label{p:der_Exp} Suppose $f_1,\ldots, f_n \in \cA$ and $D_1,\ldots, D_n$ are
  continuous derivations of $\cA$. Then for any $k \in \Zz$ such that $\Exp^k
  f_i$ is defined for all $1 \le i \le n$,
  \begin{align*}
    \det(D_j f_i) = 0 \iff \det(D_j \Exp^{k} f_i) = 0.
  \end{align*}
\end{prop}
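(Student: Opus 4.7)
The plan is to reduce everything to the case $k = 1$, and then to leverage the fact that $\cA$ is an integral domain together with the unit property of $\Exp$. By Proposition~\ref{p:Dlog} applied with each continuous derivation $D_j$, we have
\begin{align*}
  D_j \Exp(f_i) = \Exp(f_i) * D_j f_i
\end{align*}
for every $1 \le i, j \le n$. So the matrix $M = (D_j \Exp(f_i))$ is obtained from $M' = (D_j f_i)$ by convolving row $i$ with $\Exp(f_i)$. Expanding the determinant as an alternating sum over permutations, each product picks up exactly one factor from each row, so
\begin{align*}
  \det M = \Exp(f_1) * \cdots * \Exp(f_n) * \det M'.
\end{align*}
Since $\Exp$ maps $\cA$ into $\cA^{\times}$, each factor $\Exp(f_i)$ is a unit, hence nonzero; as $\cA$ is an integral domain, this proves the case $k = 1$.

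Next I would iterate. For $k \ge 1$, applying the base case successively to $f_i$, $\Exp f_i$, \ldots, $\Exp^{k-1} f_i$ gives a chain of equivalences
\begin{align*}
  \det(D_j f_i) = 0 \iff \det(D_j \Exp f_i) = 0 \iff \cdots \iff \det(D_j \Exp^k f_i) = 0.
\end{align*}
For the negative case $k \le -1$, I would set $g_i = \Exp^{k} f_i$. The hypothesis that $\Exp^{k} f_i$ is defined for all $i$ means that $f_i \in \cA_{+}$ and iterated Rearick logarithms keep us in $\cA_{+}$; in particular $\Exp^{-k} g_i = f_i$ since $\Exp$ and $\Log$ are mutual inverses on $\cA_{+}$. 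Applying the positive case just established to the $g_i$'s with exponent $-k \ge 1$ yields the desired equivalence.

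There is no real obstacle here beyond bookkeeping: the single substantive ingredient is Proposition~\ref{p:Dlog} together with the observation that elements of the form $\Exp(h)$ are units of $\cA$, which lets the scalar $\prod_i \Exp(f_i)$ be cancelled without affecting vanishing of the determinant. The only mild care needed is in the negative-$k$ induction step, where one must verify that each intermediate $\Log^m f_i$ still lies in $\cA_{+}$ so that the inversion $\Exp(\Log(\cdot)) = \id$ is legitimate at every stage; this is exactly what is encoded in the hypothesis that $\Exp^k f_i$ is defined.
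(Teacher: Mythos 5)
Your proof is correct and takes essentially the same approach as the paper: reduce to the single-step case $k=1$ via Proposition~\ref{p:Dlog}, factor $\det(D_j\Exp g_i) = \left(\prod_i \Exp g_i\right)\det(D_j g_i)$ by multilinearity of the determinant, and use that each $\Exp g_i$ is a nonzero element of the integral domain $\cA$. The paper is terser, stating only the base case and leaving the forward and backward iteration implicit, whereas you spell out the bookkeeping for positive and negative $k$; both arguments are the same in substance.
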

\begin{proof}
  It suffices to show that for any $g_1,\ldots, g_n \in \cA$, $\det\left(
  D_j g_i \right) = 0$ if and only if $\det\left( D_j \Exp g_i \right)
  = 0$. But this follows immediately from Proposition~\ref{p:Dlog}, since
  \begin{align*}
    \det\left( D_j \Exp g_i \right) = \det\left( \Exp(g_i)*D_j g_i \right)
    = \det\left( D_j g_i \right)\prod_{i = 1}^n \Exp(g_i)
  \end{align*}
  and $\Exp g \neq 0$ for any $g \in \cA$.
\end{proof}

As another application of Proposition~\ref{p:Dlog}, let us compute the
function $\kappa := \Log \vec{1}$. On the one hand, $\partial_L\vec{1} =
\partial_L \kappa *\vec{1}$. On the other hand,
\begin{align*}
   \partial_L \vec{1}(n) = \log(n) = \sum_{p|n} v_p(n)\log p =  \sum_{p^j |
   n} \log p = (\Lambda * \vec{1})(n).
\end{align*}
So $\partial_L \kappa = \Lambda$ is the von Mangoldt's function. Thus,
\begin{align*}
  \kappa(n) =
  \begin{cases}
    \dfrac{\Lambda(p^j)}{\log(p^j)} = \dfrac{1}{j} &\text{if $n = p^j$ for some
    prime $p$ and $j \ge 1$;} \\ 0 &\text{otherwise.}
  \end{cases}
\end{align*}

For $g \in \cA$, let $\mf_g$ denote the $\Cc$-linear map from $\cA$
to itself defined by $\mf_g(f) = g \cdot f$ (pointwise product). It
is clear that $\norm{\mf_g(f)} \le \norm{f}$. Thus, $\mf_g$ preserves
null sequences and hence is continuous by linearity. It is also clear
that $\mf_h$ is the compositional inverse of $\mf_g$ if and only if $h$
is the pointwise multiplicative inverse of $g$. If $g$ is {\bf completely
additive}, i.e. $g(nm) = g(n) + g(m)$ for all $n,m \in \Nn$, one checks that
$\mf_g$ is a (continuous) derivation of $\cA$ and vice versa. For example,
$\mf_{\log}$ is simply the log-derivation $\partial_L$. We will use the more
suggestive notation $\partial_g$ for $\mf_g$ in case it is a derivation. A
completely additive function is determined by its action on the primes
and its value at 1 must be $0$. Besides the real logarithm, the $p$-adic
valuation $v_p$, and the function $\Omega$, which counts (with multiplicity)
the total number of prime factors of its argument, are some examples of
completely additive function. If $g$ is {\bf completely multiplicative},
i.e. $g \neq 0$ and $g(nm) = g(n)g(m)$ for all $n,m \in \Nn$, one checks
that $\mf_g$ is a nonzero (continuous) $\Cc$-algebra endomorphism of $\cA$
and vice versa. If, in addition, $g$ vanishes nowhere then its pointwise
multiplicative inverse is also completely multiplicative. Thus $\mf_g$ is
a continuous automorphism of $\cA$. For example, $\mf_{\mathbf{I}}$, where
$\mathbf{I}$ is the identity map of $\Nn$, is a continuous automorphism of
$\cA$. A completely multiplicative function is determined by its action on
the primes and its value at $1$ must be $1$. Besides the identity function,
the map $n \mapsto n^\alpha$ ($\alpha \in \Cc$) and $\vec{1}_p$ are some
examples of completely multiplicative functions.

We conclude this section by an observation that will be used a number of
times in Section~\ref{sec:mg-ind}.
\begin{lem}
  \label{l:ord_inv} For any $f,g \in \cA$, $p \in \Pp$ and $i \in \Zz$ such
  that $\mf_g^i$ is defined, $v(\partial_p f) \le v(\partial_p \mf_g^i(f))$.
  Moreover, the equality holds if $g(m) \neq 0$ for all $m >1$.
\end{lem}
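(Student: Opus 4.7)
The plan is to establish the single explicit identity
\begin{align*}
  (\partial_p \mf_g^i f)(n) = g(np)^i \, (\partial_p f)(n) \qquad (n \in \Nn)
\end{align*}
and then read off both assertions from it. To prove the identity, I would first check that $\mf_g^i(f)(n) = g(n)^i f(n)$ for every $n$: when $i \geq 0$ this is a one-line induction, while when $i < 0$ the assumption that $\mf_g^i$ is defined forces $g$ to vanish nowhere (by the earlier remark that $\mf_h$ is the compositional inverse of $\mf_g$ iff $h$ is the pointwise inverse of $g$), so $g(n)^i$ makes sense and the formula still holds. Substituting $h = \mf_g^i f$ into the defining formula $(\partial_p h)(n) = h(np) v_p(np)$ yields the displayed identity.

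The first assertion $v(\partial_p f) \leq v(\partial_p \mf_g^i f)$ is then immediate from the identity: $(\partial_p f)(n) = 0$ forces $(\partial_p \mf_g^i f)(n) = 0$, so $\supp(\partial_p \mf_g^i f) \subseteq \supp(\partial_p f)$. This covers both the case $\partial_p f = 0$ (where $\infty \leq \infty$) and the generic case.

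For equality under the stronger hypothesis $g(m) \neq 0$ for all $m > 1$, let $n_0 = v(\partial_p f)$, finite (otherwise $\partial_p \mf_g^i f$ also vanishes identically by the identity, and there is nothing to show). Since $n_0 p \geq p \geq 2$, the hypothesis forces $g(n_0 p) \neq 0$, so evaluating the identity at $n_0$ yields $(\partial_p \mf_g^i f)(n_0) = g(n_0 p)^i (\partial_p f)(n_0) \neq 0$, whence $v(\partial_p \mf_g^i f) \leq n_0$. I do not foresee any real obstacle; the only subtle point is the $i<0$ case of the formula $\mf_g^i(f)(n) = g(n)^i f(n)$, where one has to unpack the invertibility implicit in the phrase ``$\mf_g^i$ is defined''.
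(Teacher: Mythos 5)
Your proof is correct and follows essentially the same route as the paper's: both rest on the pointwise identity $(\partial_p \mf_g^i f)(n) = g(np)^i(\partial_p f)(n)$, deduce the inequality from the support containment, and obtain equality by noting that $g(np) \ne 0$ whenever $g$ is nonvanishing on integers greater than $1$. Your additional care about the $i<0$ case (unpacking why ``$\mf_g^i$ is defined'' forces $g$ nowhere vanishing) is a small clarification that the paper leaves implicit.
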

\begin{proof}
For any $n \ge 1$,
 \begin{align}
   \label{eq:ord_pre} \partial_p \mf^i_g(f) (n) &= v_p(np)(g(np))^if(np) =
  (g(np))^i\partial_pf(n).
 \end{align}
Thus $\partial_p f(n) =0$ implies $\partial_p \mf^i_g(f)(n) = 0$ and so the
inequality in the lemma holds. Furthermore, if $g(np) \neq 0$ for all $n$,
the reverse implication is also true. Thus, $\partial_p f$ and $\partial_p
\mf_g^i f$ must have the same order.
\end{proof}

\section{Ax's Theorem for $\cA$} 
\label{sec:Ax4A} 

Our main observation is simple: Ax's Theorem holds for $(\cA, + , *)$.
\begin{thm}
  \label{th:sa} Suppose $\cC = \ker_{\cF} \Delta$ for some set $\Delta$
  of continuous derivations of $\cA$. Let $f_1,\dots, f_n$ be arithmetic
  functions such that either
  \begin{enumerate}[{\upshape(}$1${\upshape)}]
    \item \label{i:power} $\Exp(f_1),\dots, \Exp(f_n)$ are multiplicatively
      independent modulo $\cC^{\times}$; or
  \item \label{i:lin-ind} $f_1,\ldots, f_n$ are $\Qq$-linearly independent
  modulo $\cC$.
  \end{enumerate}
  Then
  \begin{align*}
    \td_{\cC}\cC(f_1,\dots,f_n, \Exp(f_1),\dots, \Exp(f_n)) \ge n +
    \rk(Df_i)_{\begin{smallmatrix} D \in \Delta \\ 1 \le i \le n
    \end{smallmatrix}}
  \end{align*}
\end{thm}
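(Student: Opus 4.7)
The plan is to derive Theorem~\ref{th:sa} as a direct specialization of Ax's theorem (Theorem~\ref{th:genax}), applied to the tower $\cF/\cC/\Qq$ with the set of derivations $\Delta$ extended to $\cF$, taking $y_i := f_i$ and $z_i := \Exp(f_i)$ for $1 \le i \le n$. The hypothesis $\bigcap_{D \in \Delta}\ker D = \cC$ required by Theorem~\ref{th:genax} is exactly the defining relation $\cC = \ker_\cF \Delta$ assumed here.

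Next I would verify that $y_i, z_i \in \cF^\times$. For $z_i = \Exp(f_i)$, this is immediate since $\Exp$ never vanishes on $\cA$, as noted in the proof of Proposition~\ref{p:der_Exp}. For $y_i = f_i$, each $f_i$ is forced to be nonzero by either hypothesis: under (1), $f_i = 0$ would give $\Exp(f_i) = 1 \in \cC^\times$, contradicting multiplicative independence of $\Exp(f_1),\ldots,\Exp(f_n)$ modulo $\cC^\times$; under (2), $f_i = 0 \in \cC$ contradicts $\Qq$-linear independence modulo $\cC$.

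Condition (a) of Theorem~\ref{th:genax}, namely $Dy_i = Dz_i/z_i$ for every $D \in \Delta$, follows at once from Proposition~\ref{p:Dlog}: since each $D \in \Delta$ is continuous, $D(\Exp(f_i)) = \Exp(f_i)*Df_i$, and dividing by $\Exp(f_i) \in \cF^\times$ yields $Df_i = D(\Exp(f_i))/\Exp(f_i)$. Hypothesis (1) of the present theorem is precisely Ax's condition (b) in the reformulation given immediately after Theorem~\ref{th:genax}, and hypothesis (2) is precisely condition (b$^\prime$). So Theorem~\ref{th:genax} applies, and its transcendence-degree bound is exactly the asserted inequality.

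Because the argument is essentially a translation, there is no substantive obstacle; the only point requiring a moment of thought is the case-split showing $f_i \neq 0$ under each alternative hypothesis, and the key structural input is the identification of Proposition~\ref{p:Dlog} as the differential compatibility (a) that lets Ax's criterion be imported into the convolution setting.
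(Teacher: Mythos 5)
Your proof is correct and follows essentially the same route as the paper: take $F = \cF$, $C = \cC$, $y_i = f_i$, $z_i = \Exp(f_i)$, invoke Proposition~\ref{p:Dlog} for condition (a), and observe that hypotheses (1) and (2) are conditions (b) and (b$^\prime$) of Theorem~\ref{th:genax}. Your explicit check that $f_i \neq 0$ (so $y_i \in \cF^\times$) is a small point the paper leaves implicit, but otherwise the two arguments coincide.
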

\begin{proof}
  Take the field $F$ in Theorem~\ref{th:genax} to be $\cF$ and $C =
  \cC = \ker_{\cF}\Delta$. Let $y_i = f_i$ and $z_i = \Exp f_i$ ($i
  =1,\dots,n$). Then by Proposition~\ref{p:Dlog}, $Dy_i = Dz_i/z_i$
  for all $D \in \Delta$ and $1 \le i \le n$. Therefore, Condition~(a) in
  Theorem~\ref{th:genax} holds. Condition~\eqref{i:power} and~\eqref{i:lin-ind}
  now translate into Condition~(b) and~(b$^\prime$) in Theorem~\ref{th:genax},
  respectively and so the inequality about the transcendence degree follows.
\end{proof}

As our first illustration of the power of Ax's theorem, we use it to deduce the
following generalization of Theorem~5.3 of~\cite{aids}. For $f \in \cA_+$ and
$g \in \cA$, we write $f^g$ as a shorthand for the function $\Exp(g*\Log f)$.
\begin{thm}
  \label{th:Log_power} Let $\Delta$ be a set of continuous derivations of $\cA$
  and $\cC = \ker_{\cF} \Delta$. Suppose $f \in \cA_+ \setminus \ker \Delta$
  and $1=c_0, c_1, \ldots, c_n \in \ker\Delta$ are linearly independent over
  $\Qq$, then $\Log f, f=f^{c_0}, f^{c_1},\ldots, f^{c_n}$ are algebraically
  independent over $\cC$.
\end{thm}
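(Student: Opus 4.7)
The natural move is to apply Theorem~\ref{th:sa} to a suitable list of $n+1$ functions whose exponentials carry the $f^{c_i}$'s and whose own list contains $\Log f$.

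\medskip

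The plan is to set $g_i := c_i * \Log f$ for $0 \le i \le n$, so that by definition $\Exp(g_i) = f^{c_i}$. Since $c_0 = 1$, we have $g_0 = \Log f$ and $\Exp(g_0) = f$. Hence
\begin{align*}
  \cC(g_0, \ldots, g_n, \Exp g_0, \ldots, \Exp g_n)
  = \cC(\Log f,\, c_1 * \Log f, \ldots, c_n * \Log f,\, f, f^{c_1}, \ldots, f^{c_n}),
\end{align*}
and because every $c_i \in \cC$ the $c_i * \Log f$ lie in $\cC(\Log f)$; so the left-hand field is just $\cC(\Log f, f, f^{c_1}, \ldots, f^{c_n})$, which is generated over $\cC$ by $n+2$ elements. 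If I can show this field has transcendence degree at least $n+2$ over $\cC$, these $n+2$ generators must be algebraically independent, which is exactly the conclusion.

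\medskip

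To apply Theorem~\ref{th:sa} to $g_0, \ldots, g_n$ I will verify condition~\eqref{i:lin-ind}: the $g_i$ are $\Qq$-linearly independent modulo $\cC$. Suppose $\sum_i q_i g_i = \bigl(\sum_i q_i c_i\bigr) * \Log f \in \cC$ with $q_i \in \Qq$. Setting $c = \sum_i q_i c_i \in \cC$, any $D \in \Delta$ gives $D(c * \Log f) = c * D\Log f = 0$. Since $\cA$ is a domain and, by Corollary~\ref{c:EL-KerD} applied to $f \in \cA_+ \setminus \ker\Delta$, some $D \in \Delta$ satisfies $D \Log f \ne 0$, we must have $c = 0$; by the assumed $\Qq$-linear independence of $c_0, c_1, \ldots, c_n$ all $q_i$ vanish. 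Next, the same choice of $D$ shows $Dg_0 = D \Log f \ne 0$, so the rank of the matrix $(Dg_i)_{D \in \Delta,\, 0 \le i \le n}$ is at least $1$. Theorem~\ref{th:sa} then yields
\begin{align*}
  \td_\cC \cC(g_0, \ldots, g_n, \Exp g_0, \ldots, \Exp g_n) \ge (n+1) + 1 = n+2,
\end{align*}
which completes the argument by the preceding paragraph.

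\medskip

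There is no real obstacle: the proof is essentially a bookkeeping exercise around Theorem~\ref{th:sa}. The only point that requires any care is the $\Qq$-linear-independence-modulo-$\cC$ check, where one must translate the hypothesis $f \notin \ker\Delta$ into a nonvanishing statement about $\Log f$ via Corollary~\ref{c:EL-KerD} and then use that $\cA$ is a domain to transfer it to $c * \Log f$.
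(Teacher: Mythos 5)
Your proof is correct and mirrors the paper's: both apply Theorem~\ref{th:sa} to the same family $g_i = c_i * \Log f$ ($0 \le i \le n$), use Corollary~\ref{c:EL-KerD} to produce $D_0 \in \Delta$ with $D_0 \Log f \neq 0$ and hence rank at least one, and conclude $\td_{\cC}\cC(\Log f, f, f^{c_1},\ldots,f^{c_n}) \ge n+2$. The only difference is which hypothesis of Theorem~\ref{th:sa} you verify: you check condition~(2) ($\Qq$-linear independence of the $g_i$ modulo $\cC$), while the paper checks condition~(1) (multiplicative independence of the $f^{c_i}$ modulo $\cC^{\times}$), which costs it a second invocation of Corollary~\ref{c:EL-KerD}; the two routes are otherwise interchangeable.
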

\begin{proof}
  By Corollary~\ref{c:EL-KerD}, $\Log f \notin \ker \Delta$. Thus $D_0
  \Log f \neq 0$ for some $D_0 \in \Delta$. We claim that $f = f^{c_0}$,
  $f^{c_1},\ldots, f^{c_n}$ are multiplicatively independent modulo
  $\cC^{\times}$. Suppose not, then there exist integers $k_0,\ldots, k_n$
  not all zeros such that
  \begin{align*}
    f^{k_0}f^{k_1c_1}\cdots f^{k_nc_{n}} =\Exp\left( (k_0 + k_1c_1+ \dots +
    k_nc_n)\Log f \right)
  \end{align*}
  belongs to $\cC \cap \cA = \ker \Delta$. An application of
  Corollary~\ref{c:EL-KerD} yields $(k_0 + k_1c_1 + \dots +k_nc_n) \Log f
  \in \ker \Delta$. In particular,
  \begin{align*}
    0 &= D_0( (k_0 + k_1c_1 + \cdots + k_nc_n)\Log(f)) \\ &= (k_0 + k_1c_1 +
    \cdots + k_{n}c_{n})D_0(\Log f).
  \end{align*}
  Since $D_0(\Log f) \neq 0$, that means $k_0+k_1c_1 + \dots + k_nc_n$
  must be zero, contradicting the assumption that $1,c_1,\dots, c_n$ are
  $\Qq$-linearly independent. This establishes the claim. Now by applying
  Theorem~\ref{th:sa} to the $n+1$ functions $c_i\Log f$ ($0 \le i \le n$),
  we conclude that the transcendence degree of the field
  \begin{align*}
    \cC(c_i\Log f,f^{c_i})_{0 \le i \le n} = \cC(\Log
      f,f,f^{c_1},\ldots, f^{c_n})
  \end{align*}
  over $\cC$ is at least 
  \begin{align*}
    (n+1) + \rk_{\cF}(D\Log f, c_iD\Log f)_{\begin{smallmatrix} D \in \Delta \\
      1 \le i \le n \end{smallmatrix}}.
  \end{align*}
  Since $D_0\Log f \neq 0$, the rank appeared above is $1$. This establishes
  the algebraic independence of $\Log f,f, f^{c_i}$ ($1 \le i \le n$)
  over $\cC$.
\end{proof}

\begin{cor}
  With the same notation as in Theorem~\ref{th:Log_power}, $\Log f$ is
  transcendental over $\cC(f,f^{c_1},\ldots, f^{c_n})$ for any $c_1,\ldots,
  c_n \in \ker \Delta$.  \label{c:trans}
\end{cor}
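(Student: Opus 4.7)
The plan is to reduce to the linearly independent case covered by Theorem~\ref{th:Log_power} by passing to a $\Qq$-basis of the span of $1, c_1, \dots, c_n$ inside $\ker \Delta$.

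More concretely, I would first let $W$ be the $\Qq$-subspace of $\ker \Delta$ spanned by $1, c_1, \ldots, c_n$, which is finite-dimensional. Pick a $\Qq$-basis of $W$ of the form $1, d_1, \ldots, d_m$ (we may always complete $1$ to a basis since $1 \in W \setminus \{0\}$). Write each $c_j$ as a $\Qq$-linear combination $c_j = q_{j,0} + q_{j,1} d_1 + \cdots + q_{j,m} d_m$ with $q_{j,i} \in \Qq$. Since $d_i \in \ker \Delta$ and $1, d_1, \ldots, d_m$ are $\Qq$-linearly independent, Theorem~\ref{th:Log_power} applies and tells us that $\Log f, f, f^{d_1}, \ldots, f^{d_m}$ are algebraically independent over $\cC$. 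In particular, $\Log f$ is transcendental over $L := \cC(f, f^{d_1}, \ldots, f^{d_m})$.

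Next I would show that each $f^{c_j}$ is algebraic over $L$. Using the definition $f^g = \Exp(g*\Log f)$ and the fact that $\Log f^{d_i} = d_i * \Log f$, the identity
\begin{align*}
  c_j * \Log f = q_{j,0}\Log f + q_{j,1}\Log f^{d_1} + \cdots + q_{j,m}\Log f^{d_m}
\end{align*}
holds in $\cA$. Pick a positive integer $N_j$ clearing the denominators of $q_{j,0}, \ldots, q_{j,m}$. Applying $\Exp$ after multiplying by $N_j$ gives
\begin{align*}
 (f^{c_j})^{N_j} = f^{N_j q_{j,0}} * (f^{d_1})^{N_j q_{j,1}} * \cdots * (f^{d_m})^{N_j q_{j,m}} \in L,
\end{align*}
where now all exponents are integers. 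Hence $f^{c_j}$ is a root of $X^{N_j} - (f^{c_j})^{N_j} \in L[X]$ and is algebraic over $L$.

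Finally, since every generator of $\cC(f, f^{c_1}, \ldots, f^{c_n})$ is algebraic over $L$, this whole field sits inside the algebraic closure of $L$ in $\cF$. As $\Log f$ is transcendental over $L$, it remains transcendental over any algebraic extension of $L$, and hence over the subfield $\cC(f, f^{c_1}, \ldots, f^{c_n})$. The only nontrivial step is the bookkeeping with rational exponents in the Rearick group $(\cA_+, *)$, which is straightforward because this group is uniquely divisible via $\Exp, \Log$; I do not expect any serious obstacle beyond choosing the basis to contain $1$ so that Theorem~\ref{th:Log_power} applies directly.
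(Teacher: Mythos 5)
Your proposal is correct and follows essentially the same strategy as the paper's proof: pass to a $\Qq$-basis of the span of $1, c_1, \ldots, c_n$ containing $1$, invoke Theorem~\ref{th:Log_power} on that basis, and then observe that every $f^{c_j}$ is algebraic over the resulting field by clearing denominators via the group isomorphism $\Exp \colon (\cA_{\Rr},+)\to(\cA_+,*)$. The only cosmetic difference is that the paper re-indexes so the basis is taken from among the $c_i$ themselves (giving $\cC(f,f^{c_1},\ldots,f^{c_m})\subseteq\cC(f,f^{c_1},\ldots,f^{c_n})$ directly), whereas you allow an arbitrary basis $1,d_1,\ldots,d_m$ of the span and then pass through the relative algebraic closure; both routes are valid and amount to the same argument.
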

\begin{proof}
  By re-indexing, if necessary, $1=c_0, c_1,\ldots, c_m$ (for some $0
  \le m \le n)$ form a basis of the $\Qq$-span of $1,c_1,\ldots, c_n$. By
  Theorem~\ref{th:Log_power}, $\Log f$ is transcendental over $\cC(f,f^{c_1},
  \ldots, f^{c_m})$. Since each $c_i$ ($0 \le i \le n$) is a $\Qq$-linear
  combination of $1,c_1,\ldots, c_m$, each $f^{c_i}$ is algebraic over
  $\cC(f,f^{c_1},\ldots, f^{c_m})$ and so the corollary follows.
\end{proof}

The following corollary is a very special case of Corollary~\ref{c:trans}. We
refer the reader to~\cite[Section~5]{aids} for its consequences.
\begin{cor}
  \label{c:log_zeta_power} For any complex numbers $c_1,\dots,c_n$, $\log
  \zeta$ is transcendental over $\Cc(\zeta^{c_1},\dots,\zeta^{c_n})$. In
  particular, $\log \zeta$ is transcendental over $\Cc(\zeta)$.
\end{cor}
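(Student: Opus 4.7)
The plan is to specialize Corollary~\ref{c:trans} to $\Delta = \{\partial_L\}$ and $f = \vec{1}$, then transport the resulting statement via the Dirichlet series isomorphism~\eqref{eq:isods}.

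First I would verify the hypotheses of Corollary~\ref{c:trans}. The log-derivation $\partial_L$ is continuous, and as noted in the paper its kernel on $\cA$ is $\Cc$; in particular every complex number $c_i$ lies in $\ker \partial_L$. Since $\vec{1}(1) = 1 > 0$, we have $\vec{1} \in \cA_+$; and since $(\partial_L \vec{1})(n) = \log n \neq 0$ for $n \ge 2$, we have $\vec{1} \notin \ker \partial_L$. Thus Corollary~\ref{c:trans} with these data yields that $\Log \vec{1}$ is transcendental over $\cC(\vec{1}, \vec{1}^{c_1},\ldots, \vec{1}^{c_n})$, where $\cC = \ker_{\cF} \partial_L$.

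Next, because $\cC \supseteq \Cc$, transcendence over the larger field $\cC(\vec{1}, \vec{1}^{c_1},\ldots, \vec{1}^{c_n})$ immediately implies transcendence over the subfield $\Cc(\vec{1}, \vec{1}^{c_1},\ldots, \vec{1}^{c_n})$. Finally I would transport this along the $\Cc$-algebra isomorphism~\eqref{eq:isods}: $\vec{1}$ corresponds to $\zeta(s)$; by the explicit computation of $\kappa = \Log\vec{1}$ given just after Proposition~\ref{p:der_Exp}, the Dirichlet series of $\Log \vec{1}$ is $\sum_{p}\sum_{j \ge 1} (j p^{js})^{-1} = \log \zeta(s)$; and $\vec{1}^{c_i} = \Exp(c_i \Log \vec{1})$ corresponds to $\zeta(s)^{c_i}$. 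Isomorphisms preserve algebraic (in)dependence, so $\log \zeta$ is transcendental over $\Cc(\zeta, \zeta^{c_1},\ldots, \zeta^{c_n}) \supseteq \Cc(\zeta^{c_1},\ldots,\zeta^{c_n})$. The ``in particular'' clause is the case $n = 0$ (or alternatively $n=1$, $c_1 = 1$).

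The main (rather mild) obstacle lies in the last step: one has to check that the Rearick logarithm and powers $f^c$ on $\cA$ correspond, under~\eqref{eq:isods}, to the formal $\log$ and $c$-th power operations on Dirichlet series. For $\Log$ this is exactly the computation of $\kappa$ reproduced above, and for $f \mapsto f^c = \Exp(c\Log f)$ it follows from the continuity of both operations and the $\Cc$-algebra structure. Apart from that, the argument is a direct substitution into Corollary~\ref{c:trans}.
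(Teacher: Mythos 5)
Your proposal is correct and follows essentially the same route as the paper: both specialize Corollary~\ref{c:trans} with $\Delta = \{\partial_L\}$ and $f = \vec{1}$, then transport the conclusion along the Dirichlet series isomorphism~\eqref{eq:isods}. You spell out a couple of steps the paper leaves implicit — checking the hypotheses of Corollary~\ref{c:trans} and observing that transcendence over $\cC(\vec{1},\vec{1}^{c_1},\ldots,\vec{1}^{c_n})$ passes down to the subfield $\Cc(\vec{1},\vec{1}^{c_1},\ldots,\vec{1}^{c_n})$ since $\cC \supseteq \Cc$ — but these are exactly the unstated steps in the paper's one-line argument, not a different approach.
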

\begin{proof}
  By invoking the isomorphism in~\eqref{eq:isods}, it suffices to show that
  the function $\kappa = \Log \vec{1}$ is transcendental over $\Cc(\vec{1},
  \vec{1}^{c_1},\ldots, \vec{1}^{c_n})$ but that follows immediately from
  Corollary~\ref{c:trans} by taking $\Delta = \{\partial_L\}$ and $f =
  \vec{1}$.
\end{proof}

The central result about algebraic independence of arithmetic functions is the
following criterion of Shapiro and Sparer~\cite[Theorem~3.1]{aids}. We refer the
reader to~\cite{aids,imaf} and~\cite{obs} for its numerous applications.
\begin{thm*}
  \label{th:Jac} Let $f_1,\ldots, f_n$ be arithmetic functions. Suppose $D_1,
  \ldots, D_n$ are derivations of $\cA$ such that $\det(D_jf_i) \neq 0$ then
  $f_1, \ldots, f_n$ are algebraically independent over $\ker\{D_1,\ldots,
  D_n\}$.
\end{thm*}
As our second illustration of the power of Ax's Theorem, we use it to
strengthen the Jacobian criterion when the derivations involved are continuous.
\begin{thm}
  \label{th:gJac} Let $f_1, \ldots, f_n \in \cA$. Suppose $D_1,\ldots, D_n$
  are continuous derivations of $\cA$ such that $\det\left( D_j f_i \right)
  \neq 0$ then the set of arithmetic functions
  \begin{align*}
    \Exp^*\{f_i \colon 1 \le i \le n\}
  \end{align*}
  is algebraically independent over $\ker\{D_1,\dots, D_n\}$.
\end{thm}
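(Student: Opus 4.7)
The plan is to apply Theorem~\ref{th:sa} inductively, building up algebraic independence of successive $\Exp$-layers of the $f_i$. Set $\Delta = \{D_1, \ldots, D_n\}$ and $\cC = \ker_\cF \Delta$. Since $\ker \Delta \subseteq \cC$, algebraic independence over $\cC$ implies that over $\ker \Delta$, so it suffices to show $\Exp^* W$ is algebraically independent over $\cC$, where $W = \{f_1, \ldots, f_n\}$. First I reduce to the case $k_W = 0$: iterating Proposition~\ref{p:der_Exp} gives $\det(D_j \Log^{k_W} f_i) \ne 0$, so $W' := \{\Log^{k_W} f_i : 1 \le i \le n\}$ satisfies the same Jacobian hypothesis, has $k_{W'} = 0$, and $\Exp^* W' = \Exp^* W$. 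Hence I may assume $k_W = 0$; it is then enough to show, for each $M \ge 0$, that $W_M := \{\Exp^m f_i : 0 \le m \le M, 1 \le i \le n\}$ is algebraically independent over $\cC$.

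For the base case $M = 0$, apply Theorem~\ref{th:sa} to the functions $f_1, \ldots, f_n$. These are $\Qq$-linearly independent modulo $\cC$, since any relation $\sum c_i f_i \in \cC \cap \cA = \ker \Delta$ with $c_i \in \Qq$ not all zero would yield, upon applying each $D_j$, a nontrivial column relation in the nonsingular Jacobian $(D_j f_i)$. Condition~(2) of Theorem~\ref{th:sa} thus holds, giving $\td_\cC \cC(f_1, \ldots, f_n, \Exp f_1, \ldots, \Exp f_n) \ge n + n = 2n$; in particular $W_0 = \{f_i\}$ is algebraically independent over $\cC$.

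For the inductive step from $M$ to $M+1$, apply Theorem~\ref{th:sa} to the $(M+1)n$ functions in $W_M$. The inductive hypothesis --- that $W_M$ is algebraically independent over $\cC$ --- implies in particular that the elements of $W_M$ are $\Qq$-linearly independent modulo $\cC$, so condition~(2) is satisfied. The Jacobian $(D_j h)_{j, h \in W_M}$ has rank $n$, since the $n$ columns corresponding to $h = f_i$ form the nonsingular submatrix $(D_j f_i)$. Theorem~\ref{th:sa} thus gives $\td_\cC \cC(W_M \cup \Exp W_M) \ge (M+1)n + n = (M+2)n$, and since $W_M \cup \Exp W_M = W_{M+1}$ has exactly $(M+2)n$ elements, the bound forces $W_{M+1}$ to be algebraically independent over $\cC$, closing the induction.

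The main conceptual step is to iterate Theorem~\ref{th:sa} with the inductive hypothesis feeding condition~(2) at each level, rather than trying to apply the theorem in one shot to the entire set $\Exp^* W$; the remaining ingredients --- the reduction to $k_W = 0$ via Proposition~\ref{p:der_Exp}, and the rank computation --- are routine.
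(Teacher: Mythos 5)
Your proof is correct and follows essentially the same route as the paper's: reduce to $k_W=0$ via Proposition~\ref{p:der_Exp}, show $\Qq$-linear independence modulo $\cC$ by applying the $D_j$ against the nonsingular Jacobian, and then iterate Theorem~\ref{th:sa} inductively, with the inductive hypothesis supplying Condition~(2) and the embedded submatrix $(D_j f_i)$ supplying the rank-$n$ contribution at each level. The only cosmetic difference is that the paper takes $m=1$ (i.e.\ $\{g_i,\Exp g_i\}$) as the base case while you start at $M=0$; the content is identical.
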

\begin{proof}
  Let $\cC=\ker_{\cF}\{D_1,\ldots, D_n\}$ and $k_0 \ge 0$ be the largest
  integer such that for each $1 \le i \le n$, $g_i := \Log^{k_0} f_i$
  is defined. It then suffices to show that for any $m \ge 1$, the set of
  arithmetic functions
  \begin{align*}
    \{\Exp^k g_i \colon 0 \le k \le m, 1 \le i \le n\}
  \end{align*}
  is algebraically independent over $\cC \supseteq \ker\{D_1,\ldots, D_n\}$. We
  will prove this by induction on $m$. First, let us argue that $g_1,\ldots,
  g_n$ are $\Qq$-linearly independent modulo $\cC$. Suppose some $\Qq$-linear
  combination $\sum r_ig_i$ of the $g_i$'s belongs to $\cC$ then by applying
  $D_j$ ($1 \le j \le n$) to the linear combination we obtain a system of $n$
  linear equations:
  \begin{align*}
    \sum_{i=1}^n r_i D_j g_i = 0 \qquad (1 \le j \le n).
  \end{align*}
  Since $\det\left( D_j f_i \right) \neq 0$, by Proposition~\ref{p:der_Exp}
  $\det\left( D_j g_i \right) \neq 0$ as well and so the $r_i$ ($1 \le i
  \le n$) must be all zero.  This establishes the claim. Now we can apply
  Theorem~\ref{th:sa} to $g_1,\ldots, g_n$ and conclude that \begin{align*}
    \td_{\cC}\cC(g_1,\dots, g_n, \Exp(g_1),\dots, \Exp(g_n)) \ge n +
    \rk(D_j g_i).
  \end{align*}
  Again since $\det(D_jg_i) \neq 0$, the $\cF$-rank of $(D_j g_i)$ is
  $n$. This establishes the algebraic independence of $g_i, \Exp(g_i)$
  ($1 \le i \le n$) over $\cC$, i.e. the case $m=1$ of the theorem.

  For the induction step, suppose the functions $\Exp^k(g_i)$ $(0 \le k \le m,
  1 \le i \le n)$ are algebraically independent over $\cC$ for some $m \ge
  1$. In particular, these functions are $\Qq$-linearly independent modulo
  $\cC$ and we conclude from Theorem~\ref{th:sa} that the transcendence
  degree of the field
  \begin{align*}
    \cE: = \cC&(\Exp^k(g_i) \colon 0 \le k \le m+1, 1 \le i \le n)
  \end{align*}
  over $\cC$ is at least $n(m+1) + \rk V$ where $V$ is the set of vectors
  \begin{align*}
    \{ (D_j \Exp^k(g_i))_{1 \le j \le n} \colon 0 \le k \le m, 1 \le i \le n\}.
  \end{align*}
  Again because $\det\left( D_j g_i \right) \neq 0$, the $\cF$-rank of $V$
  is at least (in fact exactly) $n$. Consequently, the transcendence degree
  of $\cE$ over $\cC$ is $(m+2)n$. This establishes the induction step and
  hence the theorem.
\end{proof}

Theorem~\ref{th:gJac}, strictly speaking, is not a generalization of
the Jacobian criterion because it requires the derivations involved to be
continuous. However, to the best of our knowledge, all existing applications
of this criterion involve only the log-derivation and the basic derivations so
to all of them Theorem~\ref{th:gJac} is applicable. In the next two sections,
we will generalize a number of results in~\cite{aids, imaf} and~\cite{obs}
in various directions.

\section{Algebraic Independence} \label{sec:algdep} 

We begin this section with a very special case of Theorem~\ref{th:gJac}
when only a single derivation is involved.
\begin{prop}
  \label{p:keracl} Let $D$ be a continuous derivation of $\cA$ and $f \notin
  \ker D$. Then $\Exp^*\{f\}$ is algebraically independent over $\ker D$.
  In particular, $\ker D$ is algebraically closed in $\cA$.
\end{prop}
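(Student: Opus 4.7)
The plan is to recognize this proposition as essentially the $n=1$ instance of Theorem~\ref{th:gJac}, and then to derive the algebraic closedness statement as a short formal consequence.

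First I would apply Theorem~\ref{th:gJac} with $n=1$, $D_1 = D$, and $f_1 = f$. The Jacobian condition $\det(D_j f_i) \neq 0$ reduces to $Df \neq 0$, which is precisely the hypothesis $f \notin \ker D$. The conclusion of Theorem~\ref{th:gJac} then gives exactly that $\Exp^*\{f\}$ is algebraically independent over $\ker\{D\} = \ker D$. This already handles the first assertion with essentially no work.

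For the ``in particular'' part, I would argue by contradiction. Suppose $g \in \cA$ is algebraic over $\ker D$; I want to show $g \in \ker D$. If instead $g \notin \ker D$, then the first part of the proposition (applied with $f = g$) says that $\Exp^*\{g\}$ is algebraically independent over $\ker D$. Since $\Exp^0 g = g$ belongs to $\Exp^*\{g\}$ by the convention that $\Exp^0$ is the identity map, the singleton $\{g\}$ is already algebraically independent over $\ker D$, i.e.\ $g$ is transcendental over $\ker D$. This contradicts the assumed algebraicity, so $g \in \ker D$.

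The only subtlety to double-check is that $f$ itself actually appears in the set $\Exp^*\{f\}$ as defined earlier in the paper; this follows from the stated convention $\Exp^0 = \id$ and the fact that $k_{\{f\}} \ge 0$, so the index $m = 0$ is always admissible. There is no real obstacle here: the proposition is essentially a packaging of the $n=1$ case of Theorem~\ref{th:gJac} together with the observation that a field algebraically independent from every element outside it is, by definition, algebraically closed in the ambient ring.
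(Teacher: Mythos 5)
Your proposal is correct and takes exactly the paper's approach: the paper also presents this proposition as the $n=1$ specialization of Theorem~\ref{th:gJac}, and the algebraic-closedness claim follows since $f = \Exp^0 f \in \Exp^*\{f\}$ makes every $f \notin \ker D$ transcendental over $\ker D$.
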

Proposition~\ref{p:keracl} generalizes Proposition~2.1 of~\cite{imaf}. For
example, by taking $D = \partial_L$, one sees that $\Cc$ is algebraically
closed in $\cA$ and that $\Log(f), f, \Exp(f)$ are algebraically
independent over $\Cc$ for $f \in \cA_+\setminus \Cc$. We should point
out that the kernel of a derivation of $\cA$, whether continuous or not,
is always algebraically closed in $\cA$. As a matter of fact, the argument
given for that in~\cite{aids} (see Lemma~2.1 of~\cite{aids}) works for
any characteristic zero integral domain. From Proposition~\ref{p:keracl},
we can also deduce the following generalization of Theorem~2.1 of~\cite{aids}.
\begin{thm}
  \label{th:fgsa} Let $f \in \cA$ and $(g_i)_{i\in I}$ be a family of
  arithmetic functions. Suppose
  \begin{align*}
    [\supp f] \not\subseteq \bigcup_{i \in I} [\supp g_i]
  \end{align*}
  then $\Exp^*\{f\}$ is algebraically independent over the subalgebra of $\cA$
  generated by the $g_i$ $(i \in I)$.
\end{thm}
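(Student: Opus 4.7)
The plan is to reduce this to Proposition~\ref{p:keracl} by exhibiting a single continuous derivation of $\cA$ whose kernel contains the subalgebra generated by the $g_i$'s but does not contain $f$. The natural candidate is a basic derivation $\partial_p$, exploiting the support-kernel dictionary in~\eqref{eq:supp_ker}.

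By hypothesis there is a prime $p \in [\supp f]$ with $p \notin [\supp g_i]$ for every $i \in I$. First I would check that each $g_i$ lies in $\ker \partial_p$: this is immediate from~\eqref{eq:supp_ker}, since $p \notin [\supp g_i]$. Because $\ker \partial_p$ is a $\Cc$-subalgebra of $\cA$ (kernels of derivations are subalgebras), the entire $\Cc$-subalgebra generated by the family $(g_i)_{i \in I}$ is contained in $\ker \partial_p$.

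Next I would verify that $f \notin \ker \partial_p$, again via~\eqref{eq:supp_ker}: since $p \in [\supp f]$, some $m \in \supp f$ is divisible by $p$, so writing $m = np$ with $n \in \Nn$ gives $(\partial_p f)(n) = v_p(np)\, f(np) \neq 0$ because $v_p(m) \ge 1$ and $f(m) \ne 0$. Thus $\partial_p f \neq 0$.

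Since $\partial_p$ is a continuous derivation of $\cA$ and $f \notin \ker \partial_p$, Proposition~\ref{p:keracl} yields that $\Exp^* \{f\}$ is algebraically independent over $\ker \partial_p$. As the $\Cc$-subalgebra generated by the $g_i$ sits inside $\ker \partial_p$, algebraic independence over that smaller subalgebra follows a fortiori. There is no real obstacle here; the only subtlety is to be sure the support hypothesis is being used precisely in the form that~\eqref{eq:supp_ker} demands, namely as a statement about which primes divide elements of the supports, rather than about the supports themselves.
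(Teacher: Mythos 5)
Your proof is correct and takes the same route as the paper: pick a prime $p \in [\supp f]$ outside every $[\supp g_i]$, use~\eqref{eq:supp_ker} to see that the subalgebra generated by the $g_i$ lies in $\ker\partial_p$ while $f \notin \ker\partial_p$, and then apply Proposition~\ref{p:keracl}. The paper states this more tersely but it is the identical argument.
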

\begin{proof}
  By the assumption there is a prime $p \in [\supp f]$ that is not in the
  union of the $[\supp g_i]$ ($i \in I$). So by Proposition~\ref{p:keracl},
  $\Exp^*\{f\}$ is algebraically independent over $\ker \partial_p$ which
  contains the subalgebra of $\cA$ generated by the $g_i$ ($i \in I$).
\end{proof}
We provide a proof of one of the many consequences of~\cite[Theorem~2.1]{aids}.
The reader can consult~\cite[p.697-699]{aids} for the others.
\begin{cor}
  \label{c:S_is_acl} $\cS$ is algebraically closed in $\cA$.
\end{cor}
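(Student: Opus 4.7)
The plan is to argue by contrapositive using Theorem~\ref{th:fgsa}. Suppose $f \in \cA$ is algebraic over $\cS$. Then $f$ satisfies a nontrivial polynomial relation whose coefficients involve only finitely many elements of $\cS$, say $g_1,\ldots,g_k$. In particular $f$ is algebraic over the $\Cc$-subalgebra $\Cc[g_1,\ldots,g_k]$ of $\cA$ generated by these $g_i$.

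Next I would invoke Theorem~\ref{th:fgsa} applied to $f$ and the finite family $g_1,\ldots,g_k$. Since $\Exp^*\{f\}$ contains $f$ (taking $m=0$), algebraic independence of $\Exp^*\{f\}$ over $\Cc[g_1,\ldots,g_k]$ would contradict algebraicity of $f$ over that subalgebra. Hence the hypothesis of Theorem~\ref{th:fgsa} must fail, i.e.
\begin{align*}
[\supp f] \subseteq \bigcup_{i=1}^k [\supp g_i].
\end{align*}

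Finally, since each $g_i \in \cS$, each $[\supp g_i]$ is a finite set of primes, so their union is finite. Therefore $[\supp f]$ is finite, meaning $f$ has finitely generated support, so $f \in \cS$. This gives $\cS$ algebraically closed in $\cA$.

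No step looks like an obstacle: the whole argument is essentially a one-line deduction from Theorem~\ref{th:fgsa}, with the only point to be slightly careful about being that an algebraic relation over $\cS$ uses only finitely many elements of $\cS$, so that the union of their prime divisors remains finite and the contrapositive form of Theorem~\ref{th:fgsa} applies.
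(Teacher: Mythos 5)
Your proof is correct and is essentially the paper's own argument, merely phrased contrapositively: both reduce to the observation that an algebraic relation over $\cS$ involves only finitely many $g_i\in\cS$ with finite $\bigcup[\supp g_i]$, and then apply Theorem~\ref{th:fgsa} to conclude $[\supp f]$ is finite (equivalently, to rule out $f\notin\cS$).
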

\begin{proof}
  Suppose $g_1, \ldots, g_n \in \cS$ and $f \in \cA \setminus \cS$. Then
  $[\supp f]$ is infinite while the union of $[\supp g_i]$ ($1 \le i \le n$)
  is finite.  So it follows from Theorem~\ref{th:fgsa} that $\Exp^*\{f\}$, in
  particular $f$ itself, is algebraically independent over $\Cc[g_1,\ldots,
  g_n]$. Since $g_i \in \cS$ ($1 \le i \le n$) are taken arbitrarily, we
  conclude that $f$ is algebraically independent over any finitely generated
  subalgebra of $\cS$ and hence over $\cS$ itself.
\end{proof}
\begin{exa}
  \label{ex:1_tran_T} The function $\vec{1}$ is not a member of $\cS$ so by
  Corollary~\ref{c:S_is_acl} it is transcendental over $\cS$ and hence over
  $\cT$. In terms of Dirichlet series, that means the Riemann zeta function
  is transcendental over the subalgebra of {\bf Dirichlet polynomials}
  (Dirichlet series with only finitely many nonzero terms).
\end{exa}
In contrast, $\cT$ is not algebraically closed in $\cA$ (in fact, not
even in $\cS$).  For instance, $\vec{1}_2 = \sum_{k=0}^\infty e_2^k$ is in
$\cS\setminus \cT$ but it is algebraic over $\cT$ since its inverse $1-e_2$ is
in $\cT$. This shows, in particular, that the algebra of Dirichlet polynomials
is not algebraically closed in the algebra of convergent Dirichlet series.

\begin{thm}
  \label{th:tri} Let $f_1,\dots, f_n$ be arithmetic functions. Suppose there
  exist $D_1,\dots, D_n$ continuous derivations of $\cA$ such that for each
  $1 \le i < j \le n$,
  \begin{align*}
    f_i \in \ker D_j \setminus \ker D_i.
  \end{align*}
  Then the set of arithmetic functions $\Exp^*\{f_1,\dots, f_n\}$ is
  algebraically independent over $\ker\{D_1,\dots, D_n\}$.
\end{thm}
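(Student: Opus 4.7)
The plan is to reduce this theorem immediately to Theorem~\ref{th:gJac}. All we need to verify is that the Jacobian determinant $\det(D_j f_i)$ is nonzero, and then Theorem~\ref{th:gJac} applied to $f_1,\dots,f_n$ together with the given continuous derivations $D_1,\dots,D_n$ delivers the conclusion that $\Exp^*\{f_1,\dots,f_n\}$ is algebraically independent over $\ker\{D_1,\dots,D_n\}$.

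To compute the determinant, I would form the matrix $M = (D_j f_i)_{1 \le j, i \le n}$ with rows indexed by $j$ and columns indexed by $i$. The hypothesis that $f_i \in \ker D_j$ whenever $i < j$ tells us that $D_j f_i = 0$ for all entries with $j > i$, i.e.\ all entries strictly below the diagonal vanish. Hence $M$ is upper triangular. The diagonal entries are $D_i f_i$, and the hypothesis that $f_i \notin \ker D_i$ (obtained from the case $j = i$ of the ``$\setminus \ker D_i$'' part, or more cleanly from taking $j = i$ in the assumed shape and reading off the non-membership) gives $D_i f_i \neq 0$ for each $i$. Therefore
\begin{align*}
  \det(D_j f_i) = \prod_{i=1}^n D_i f_i \neq 0,
\end{align*}
since $\cF$ is an integral domain.

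With this in hand, Theorem~\ref{th:gJac} applies verbatim and yields the algebraic independence of $\Exp^*\{f_1,\dots,f_n\}$ over $\ker\{D_1,\dots,D_n\}$, which is exactly the conclusion. There is no real obstacle here — the only point worth taking care over is the indexing convention so that the vanishing pattern $D_j f_i = 0$ for $i < j$ is correctly identified as the strictly lower-triangular part (making $M$ upper triangular), rather than its transpose; either way the determinant is $\prod_i D_i f_i$, and triangularity is what makes the hypothesis of the generalized Jacobian criterion essentially automatic.
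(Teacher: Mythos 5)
Your proposal is correct and matches the paper's proof exactly: the hypothesis forces $(D_j f_i)$ to be triangular with nonzero diagonal, so the determinant is $\prod_i D_i f_i \neq 0$ and Theorem~\ref{th:gJac} applies directly. One small caution: the stated hypothesis covers only pairs with $i < j$, so there is no literal ``case $j = i$'' to invoke; the non-vanishing $D_i f_i \neq 0$ for $i < n$ comes from noting that the clause ``$f_i \notin \ker D_i$'' is independent of $j$ (and for $i = n$ both you and the paper tacitly assume $D_n f_n \neq 0$, which the letter of the hypothesis leaves out).
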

\begin{proof}
  It is an immediate consequence of Theorem~\ref{th:gJac}. This is because the
  assumption implies $\left( D_j f_i \right)$ is a lower triangular matrix with
  non-zero entries on its diagonal hence $\det \left( D_j f_i \right) \neq 0$.
\end{proof}

\begin{cor}
  \label{c:tri_supp} Let $f_1,\dots, f_n \in \cA$. Suppose there exist
  $p_1,\ldots, p_n$ such that for each $1 \le i < j \le n$
  \begin{align*}
    p_j \in [\supp f_j] \setminus [\supp f_i],
  \end{align*}
  then the set of arithmetic functions $\Exp^*\{f_1,\dots, f_n\}$ is
  algebraically independent over the kernel of $\{\partial_{p_1},\dots,
  \partial_{p_n} \}$.
\end{cor}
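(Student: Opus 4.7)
The plan is to apply Theorem~\ref{th:tri} directly, with $D_j := \partial_{p_j}$ for $j = 1, \ldots, n$. The entire argument reduces to translating the support-theoretic hypothesis of the corollary into the kernel-theoretic hypothesis of Theorem~\ref{th:tri} via the characterization~\eqref{eq:supp_ker}, namely $f \in \ker \partial_p \iff p \notin [\supp f]$.

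More precisely, for each pair $1 \le i < j \le n$, I would unpack the condition $p_j \in [\supp f_j] \setminus [\supp f_i]$ into its two component assertions. The clause $p_j \notin [\supp f_i]$ is, by~\eqref{eq:supp_ker}, equivalent to $f_i \in \ker \partial_{p_j}$, which supplies the off-diagonal vanishing $f_i \in \ker D_j$ for $i < j$. For the required non-vanishing $f_i \notin \ker D_i$, I would apply the hypothesis to the pair $(i', i)$ with any index $i' < i$: this yields $p_i \in [\supp f_i] \setminus [\supp f_{i'}]$, and in particular $p_i \in [\supp f_i]$, whence $f_i \notin \ker \partial_{p_i}$ by~\eqref{eq:supp_ker} again. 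With both halves of the hypothesis of Theorem~\ref{th:tri} in place, its conclusion is exactly the desired algebraic independence of $\Exp^*\{f_1, \ldots, f_n\}$ over $\ker\{\partial_{p_1}, \ldots, \partial_{p_n}\}$.

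There is essentially no real obstacle: the corollary is pure bookkeeping once~\eqref{eq:supp_ker} is in hand. The only point to watch is keeping track of the two directions in the set difference $[\supp f_j] \setminus [\supp f_i]$, so that the ``belongs to support'' clause is routed to a diagonal non-vanishing condition and the ``does not belong to support'' clause is routed to an off-diagonal triangularity condition.
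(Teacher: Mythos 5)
Your approach is the same as the paper's, which instantiates Theorem~\ref{th:tri} with $D_j := \partial_{p_j}$; the translation through~\eqref{eq:supp_ker} is correct, and the off-diagonal clause $p_j \notin [\supp f_i]$ does give $f_i \in \ker \partial_{p_j}$ for $i < j$ exactly as you say.

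The gap is in the diagonal condition. To get $f_i \notin \ker \partial_{p_i}$ you read the hypothesis at a pair $(i', i)$ with $i' < i$; for $i = 1$ no such $i'$ exists, so the stated hypothesis never yields $p_1 \in [\supp f_1]$ and hence never rules out $f_1 \in \ker \partial_{p_1}$ — yet Theorem~\ref{th:tri} demands $f_1 \notin \ker D_1$ via the pair $(1, j)$. This is not a cosmetic point: take $n = 2$, $f_1 = e_3$, $f_2 = \vec{1}_2$, $p_1 = 5$, $p_2 = 2$. The only pair is $(1,2)$, and $p_2 = 2 \in [\supp f_2] \setminus [\supp f_1] = \{2\}\setminus\{3\}$, so the literal hypothesis holds; but $e_3 \in \ker\{\partial_5, \partial_2\}$, so $e_3 \in \Exp^*\{f_1,f_2\}$ is algebraic over that kernel and the conclusion fails. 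The corollary as stated needs the extra assumption $p_1 \in [\supp f_1]$, which the paper's own applications (Examples~\ref{ex:e_p} and~\ref{ex:two}) silently arrange. With that assumption added your argument is complete; without it, the step ``with any index $i' < i$'' tacitly skips $i = 1$ and should have been flagged as unprovable from the statement as given.
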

\begin{proof}
  Take $D_j$ in Theorem~\ref{th:tri} to be $\partial_{p_j}$ ($1 \le i \le n$).
\end{proof}

\begin{exa}
  \label{ex:e_p} Let $p_1,\dots, p_n$ be distinct primes. By taking
  $f_i = e_{p_i}$ ($1 \le i \le n-1$) and $f_n = \vec{1}_\Pp$ in
  Corollary~\ref{c:tri_supp}, the $\Cc$-algebraic independence of
  $\Exp^*\{e_{p_1},\ldots, e_{p_{n-1}}, \vec{1}_\Pp\}$ follows. Moreover,
  since $n$ is arbitrary, that means the set of arithmetic functions
  \begin{align*}
    \Exp^*(\{e_p \colon p \in \Pp\} \cup \{\vec{1}_\Pp\}),
  \end{align*}  
  is algebraically independent over $\Cc$.
\end{exa}

\begin{exa}
  \label{ex:two} Corollary~\ref{c:tri_supp} generalizes Lemma~3 of~\cite{obs}:
  Suppose $f_1,f_2 \in \cA \setminus \Cc$ with $[\supp f_1] \neq [\supp f_2]$.
  Without loss of generality, there is a prime $p_2 \in [\supp f_2]$ but not in
  $[\supp f_1]$. Since $f_1$ is not in $\Cc$, there exists a prime $p_1 \in
  [\supp f_1]$. Thus Corollary~\ref{c:tri_supp} implies $\Exp^*\{f_1,f_2\}$ is
  algebraically independent over $\Cc$. In particular, if $F(s)= \sum
  \alpha_n/n^s$ is a non-constant formal Dirichlet series such that $\alpha_n=0$
  whenever $n$ is a multiple of some fix prime $p$, then $F(s)$ and $\zeta(s)$
  are algebraically independent over $\Cc$.
\end{exa}

Knowing that a function is non-vanishing at a particular point certainly implies
that it is nonzero. The following proposition is hence a corollary of
Theorem~\ref{th:gJac}. We invite the reader to prove it (or
see~\cite[Corollary~2.3]{imaf} for a proof) by checking the left-side of
Equation~\eqref{eq:gvj} expresses the value of $\det\left( \partial_{p_j} f_i
\right)$ at $m$.
\begin{prop}
  \label{p:gvj} For any $f_1,\ldots, f_n \in \cA$, if there exist distinct
  primes $p_1,\ldots, p_n$ such that for some $m \in \Nn$,
 \begin{align}
  \label{eq:gvj} \sum_{k_1\dotsm k_n =m} \left(\prod_{j=1}^n
  v_{p_j}(k_jp_j)\right) \det(f_i(k_jp_j)) \neq 0,
\end{align}
then the set of arithmetic functions $\Exp^*\{f_1,\ldots, f_n\}$
is algebraically independent over $\ker\{\partial_{p_1}, \ldots,
\partial_{p_n}\}$.
\end{prop}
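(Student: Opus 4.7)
My plan is to deduce the proposition directly from the Jacobian criterion Theorem~\ref{th:gJac} applied with the $n$ continuous derivations $D_j = \partial_{p_j}$. Since these are all continuous, the only hypothesis left to verify is that the arithmetic function $\det(\partial_{p_j} f_i)$ is not identically zero, and the condition \eqref{eq:gvj} is tailor-made to exhibit a natural number $m$ at which this function is nonzero.

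Concretely, I would expand $\det(\partial_{p_j} f_i)$ by the Leibniz formula and then evaluate at $m$, obtaining
\[
\det(\partial_{p_j} f_i)(m) = \sum_{\sigma \in S_n} \sgn(\sigma) \sum_{d_1 \cdots d_n = m} \prod_{i=1}^n (\partial_{p_{\sigma(i)}} f_i)(d_i).
\]
Using the definition $(\partial_p f)(k) = v_p(kp)\, f(kp)$ and reindexing the inner product by $k_j = d_{\sigma^{-1}(j)}$, the factor $\prod_i v_{p_{\sigma(i)}}(d_i p_{\sigma(i)})$ becomes $\prod_j v_{p_j}(k_j p_j)$, which is independent of $\sigma$ and can be pulled outside the permutation sum. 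After swapping the order of summation, what remains of the sum over $\sigma$ is exactly the ordinary scalar determinant $\det(f_i(k_j p_j))$. This yields
\[
\det(\partial_{p_j}f_i)(m) = \sum_{k_1 \cdots k_n = m} \left(\prod_{j=1}^n v_{p_j}(k_j p_j)\right)\det(f_i(k_j p_j)),
\]
which is nonzero by the hypothesis \eqref{eq:gvj}. Theorem~\ref{th:gJac} then delivers the algebraic independence of $\Exp^*\{f_1,\ldots,f_n\}$ over $\ker\{\partial_{p_1},\ldots,\partial_{p_n}\}$.

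The argument is essentially combinatorial bookkeeping, so the main obstacle is purely notational: one has to keep the two distinct \emph{determinant} structures straight — the convolution-determinant of arithmetic functions on the one hand, and the ordinary scalar determinant of complex numbers on the other — and verify that the signs and indices line up correctly under the permutation reindexing $k_j = d_{\sigma^{-1}(j)}$. Once this identity between the two determinants is established, the proposition is immediate from Theorem~\ref{th:gJac}.
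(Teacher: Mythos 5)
Your proof is correct and follows exactly the route the paper intends: the paper explicitly states that Proposition~\ref{p:gvj} is a corollary of Theorem~\ref{th:gJac} and invites the reader to check that the left side of~\eqref{eq:gvj} is the value of $\det(\partial_{p_j} f_i)$ at $m$, which is precisely the identity you establish via the Leibniz expansion and the reindexing $k_j = d_{\sigma^{-1}(j)}$. The combinatorial bookkeeping — pulling the $\sigma$-independent factor $\prod_j v_{p_j}(k_j p_j)$ outside the permutation sum and recognizing the remaining alternating sum as the scalar determinant $\det(f_i(k_j p_j))$ — is carried out correctly.
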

By setting the $m$ in Proposition~\ref{p:gvj} to various values, one obtains
strengthened versions of Test I--IV in~\cite{imaf}. These tests were used
to establish algebraic independence of various Fibonacci and Lucas zeta
functions~\cite[Proposition~2.5, 2.6]{imaf}. We state here only the simplest
case, i.e. $m=1$.
\begin{cor}
  \label{c:ind_at_primes} Suppose $f_1,\dots, f_n$ are arithmetic functions
  such that $\det(f_i(p_j)) \neq 0$ for some primes $p_1,\ldots, p_n$. Then
  the set of functions 
  \begin{align*}
    \Exp^*\{f_i \colon 1 \le i \le n\}
  \end{align*}
  is algebraically independent over $\ker\{\partial_{p_j} \colon 1 \le j \le
  n\}$.
\end{cor}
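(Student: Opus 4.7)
The plan is to obtain Corollary~\ref{c:ind_at_primes} as the $m=1$ specialization of Proposition~\ref{p:gvj}. Since $k_1,\ldots,k_n$ are natural numbers, the constraint $k_1\cdots k_n = 1$ forces $k_1 = \cdots = k_n = 1$, so the sum in~\eqref{eq:gvj} collapses to the single term
\begin{align*}
  \left(\prod_{j=1}^n v_{p_j}(p_j)\right)\det(f_i(p_j)) = \det(f_i(p_j)),
\end{align*}
where the last equality uses $v_{p_j}(p_j)=1$. Under the hypothesis of the corollary this quantity is nonzero, so Proposition~\ref{p:gvj} applies and yields the desired algebraic independence over $\ker\{\partial_{p_1},\ldots,\partial_{p_n}\}$.

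Alternatively, one can bypass Proposition~\ref{p:gvj} and invoke Theorem~\ref{th:gJac} directly with the continuous derivations $D_j := \partial_{p_j}$. For this approach one needs $\det(\partial_{p_j}f_i) \neq 0$ as an element of $\cA$; this follows from Proposition~\ref{p:det_val} applied with $a_i = b_j = 1$ (valid since $v(\partial_{p_j}f_i) \ge 1$ for any nonzero arithmetic function), which gives
\begin{align*}
  \det(\partial_{p_j}f_i)(1) = \det\bigl((\partial_{p_j}f_i)(1)\bigr) = \det(f_i(p_j)) \neq 0,
\end{align*}
because $(\partial_{p_j}f_i)(1) = f_i(p_j)\cdot v_{p_j}(p_j) = f_i(p_j)$. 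Either route disposes of the corollary in a single step; there is no genuine obstacle, as the work has already been done in Proposition~\ref{p:gvj} (respectively Theorem~\ref{th:gJac}). I would present the argument via Proposition~\ref{p:gvj}, since the corollary is advertised as the simplest instance of that proposition.
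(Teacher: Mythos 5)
Your first route is exactly the paper's intended proof: the text explicitly introduces Corollary~\ref{c:ind_at_primes} as ``the simplest case, i.e.\ $m=1$'' of Proposition~\ref{p:gvj}, and your computation that the $m=1$ sum collapses to the single term $\det(f_i(p_j))$ is the verification being asked for. The alternative route via Theorem~\ref{th:gJac} and Proposition~\ref{p:det_val} is also valid (it is in effect how Proposition~\ref{p:gvj} itself is justified), so either way the argument is complete and correct.
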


\begin{exa}
  \label{ex:1_and_1p} For any distinct primes $p_1,\dots, p_{n}$, take $f_i =
  \vec{1}_{p_i}$ ($1 \le i \le n-1$) and $f_{n} = \vec{1}$, then
  \begin{align*}
    \det\left( f_i(p_j) \right) =
    \begin{pmatrix}
      1 & 0 &\dots & 0 \\ 0 & 1 &\dots & 0 \\
    \vdots & \vdots &\ddots & \vdots \\
      1 & 1 & \dots & 1
    \end{pmatrix}
    = 1.
  \end{align*}
  Thus by Corollary~\ref{c:ind_at_primes}, $\Exp^*\{\vec{1}_p, \vec{1}
  \colon p \in \Pp \}$ is algebraically independent over $\Cc$.
\end{exa}

\begin{exa}
  \label{ex:tau*} The function $\tau_*:=(\vec{1}-1)^2$ which counts the number
  of proper factors of its argument and $\vec{1}_\Pp$ are algebraically
  independent over $\Cc$. This is because $\partial_p \vec{1}_\Pp = 1$
  for every prime $p$, so
  \begin{align*}
    \det
    \begin{pmatrix}
      \partial_2 \tau_* & \partial_3 \tau_* \\
      \partial_2 \vec{1}_\Pp & \partial_3 \vec{1}_\Pp
    \end{pmatrix}
    & = \partial_2 \tau_* - \partial_3 \tau_*
  \end{align*}
  and its value at $4$ is $v_2(8)\tau_*(8) - v_3(12)\tau_*(12) = 2 \neq 0$.
  Note that Corollary~\ref{c:ind_at_primes} cannot be used to establish
  this fact since $\tau_*$, or more generally any member of the square of
  the maximal ideal of $\cA$, vanishes at every prime.
  \end{exa}
 
  For $f_1,\dots, f_n \in \cA$, let $\mu_{d}(\vec{f})$ be the minimum of
  $\norm{P(f_1,\dots, f_n)}$ taken over all complex polynomials $P$ of total
  degree $d$. The function $d \mapsto \mu_d(\vec{f})$ can be viewed as a
  quantitative measure of algebraic independence of $f_1,\dots, f_n$ over
  $\Cc$. Several results about this measure were proved in~\cite{imaf}. Our
  method, due to its non-constructive nature, cannot produce those results.
  However, the non-quantitative part of both Theorem~3.2 and Theorem~3.4
  of~\cite{imaf} can be generalized as follows.
\begin{thm}
  \label{th:m_j<=ord} Let $f_1,\ldots, f_n \in \cA$ and $D_1,\ldots, D_n$
  be continuous derivations of $\cA$. Suppose $m_1,\ldots, m_n \in \Nn$ such
  that $m_j \le v(D_jf_i)$ for all $1 \le i, j \le n$ and that $\det\left(
  D_jf_i(m_j) \right) \neq 0$ then the set of functions
  \begin{align*}
    \Exp^*\{f_i \colon 1 \le i \le n\}
  \end{align*}
  is algebraically independent over $\ker\{D_1,\ldots, D_n\}$.
\end{thm}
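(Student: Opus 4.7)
The plan is to reduce this to Theorem~\ref{th:gJac} by showing that the hypotheses imply $\det(D_j f_i) \neq 0$ as an arithmetic function, not merely at a single point. The natural tool for this is Proposition~\ref{p:det_val}, which was stated precisely to pass between the determinant of pointwise-evaluated matrices and the value of the determinant (under convolution).

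Concretely, I would set $f_{ij} := D_j f_i$, so that the matrix in question has $(i,j)$-entry $f_{ij}$. The hypothesis $m_j \le v(D_j f_i)$ for all $i,j$ allows us to choose $a_i = 1$ for each $i$ and $b_j = m_j$ for each $j$, which gives the required inequality $a_i b_j = m_j \le v(f_{ij})$. Proposition~\ref{p:det_val} then yields
\begin{align*}
  \det(D_j f_i)\left(\prod_{k=1}^n m_k\right) = \det(D_j f_i(m_j)),
\end{align*}
and the right-hand side is nonzero by assumption. In particular, $\det(D_j f_i)$ is a nonzero arithmetic function.

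Once that determinant is established to be nonzero, the conclusion follows immediately from Theorem~\ref{th:gJac}, which tells us that $\Exp^*\{f_1,\ldots,f_n\}$ is algebraically independent over $\ker\{D_1,\ldots,D_n\}$. The only subtlety worth flagging is a bookkeeping one: Proposition~\ref{p:det_val} is stated for a matrix with no extra row/column structure, so one needs to verify that the evaluated determinant on the right matches $\det(D_j f_i(m_j))$ under the index convention being used. This is just a matter of checking rows versus columns and is the only place where care is required; no real obstacle arises.
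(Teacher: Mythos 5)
Your proposal is correct and matches the paper's proof essentially line for line: both take $a_i = 1$, $b_j = m_j$ in Proposition~\ref{p:det_val} to show $\det(D_j f_i)$ evaluated at $\prod_k m_k$ equals the nonzero quantity $\det(D_j f_i(m_j))$, and then invoke Theorem~\ref{th:gJac}. The indexing you flag does check out under the conventions of Proposition~\ref{p:det_val}, so there is no real gap.
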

\begin{proof}
  By taking $a_i = 1$ and $b_j = m_j$ $(1 \le i,j \le n)$ in
  Proposition~\ref{p:det_val}, we conclude that the value of $\det\left(
  D_j f_i \right)$ at $m_1\cdots m_n$ is $\det\left( D_j f_i(m_j) \right)$
  which is assumed to be nonzero. The algebraic independence statement now
  follows form Theorem~\ref{th:gJac}.
\end{proof}
We can arrive to the same conclusion of Theorem~\ref{th:m_j<=ord} if $m_i \le
v(D_jf_i)$ for all $1 \le i, j \le n$: the same proof goes through by taking
$a_i = m_i$ and $b_j =1$ ($1 \le i,j \le n$). The next lemma is another easy
consequence of Proposition~\ref{p:det_val}.  The same is true, more generally,
for generalized Dirichlet series (see~\cite[Lemma~8.8]{aids}).
\begin{lem}
  \label{l:dep_at_primes} Suppose $f_1,\ldots, f_n$ are non-zero arithmetic
  functions and $p_1,\ldots, p_n$ are $n$ distinct primes such that the
  Jacobian $\det(\partial_{p_j}f_i)$ is zero then $\det(v_{p_j}(v f_i)) = 0$.
\end{lem}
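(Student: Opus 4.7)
The plan is to derive this from Proposition~\ref{p:det_val} by choosing $a_i$ and $b_j$ so that the test point $\prod a_k b_k$ picks out precisely the leading coefficients of the $f_i$'s, which will factor out of the matrix to leave the valuation matrix on its own. Specifically, I would set
\begin{align*}
  a_i = v(f_i), \qquad b_j = 1/p_j,
\end{align*}
so that $a_i b_j = v(f_i)/p_j$ and $\prod_k a_k b_k = \prod_k v(f_k)/\prod_k p_k$.

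First I would verify the hypothesis $a_i b_j \le v(\partial_{p_j} f_i)$ of Proposition~\ref{p:det_val}. If $n \in \Nn$ satisfies $(\partial_{p_j} f_i)(n) = v_{p_j}(np_j) f_i(np_j) \ne 0$, then $np_j \in \supp f_i$, hence $np_j \ge v(f_i)$; thus $v(\partial_{p_j} f_i) \ge v(f_i)/p_j$, as required. This is the only place where a small verification is needed, and it is really just the definition of $v(f_i)$.

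Next, since $\det(\partial_{p_j} f_i)$ is the zero arithmetic function by hypothesis, it vanishes on all of $\Rr$ (by the convention that arithmetic functions are zero off $\Nn$), so in particular at $\prod_k v(f_k)/\prod_k p_k$. Proposition~\ref{p:det_val} therefore gives
\begin{align*}
  0 = \det\bigl( (\partial_{p_j} f_i)(v(f_i)/p_j) \bigr).
\end{align*}
Now I would compute the entries. When $p_j \mid v(f_i)$, the quotient $v(f_i)/p_j$ is a positive integer and the definition of $\partial_{p_j}$ gives
\begin{align*}
  (\partial_{p_j} f_i)(v(f_i)/p_j) = v_{p_j}(v(f_i))\, f_i(v(f_i)).
\end{align*}
When $p_j \nmid v(f_i)$, the argument $v(f_i)/p_j$ is not a natural number, so the left side is $0$; but in that case $v_{p_j}(v(f_i)) = 0$ as well, so the identity above still holds.

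Finally, since each $f_i \ne 0$ its leading value $f_i(v(f_i))$ is nonzero, and this factor pulls out of row $i$:
\begin{align*}
  0 = \det\bigl( v_{p_j}(v(f_i))\, f_i(v(f_i)) \bigr) = \Bigl(\prod_{i=1}^n f_i(v(f_i))\Bigr)\det\bigl( v_{p_j}(v(f_i)) \bigr),
\end{align*}
which forces $\det(v_{p_j}(v f_i)) = 0$, as desired. The only subtle step is checking $v(f_i)/p_j \le v(\partial_{p_j} f_i)$; after that, everything reduces to bookkeeping with Proposition~\ref{p:det_val}.
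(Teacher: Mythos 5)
Your proof is correct and is essentially the paper's own argument: same choice $a_i = v(f_i)$, $b_j = 1/p_j$, same application of Proposition~\ref{p:det_val}, and the same final step of factoring the nonzero leading coefficients $f_i(v(f_i))$ out of the rows. Your write-up is slightly more careful than the paper's in spelling out the verification $v(f_i)/p_j \le v(\partial_{p_j}f_i)$ and in noting that when $p_j \nmid v(f_i)$ both sides of the entrywise identity vanish, but the underlying argument is identical.
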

\begin{proof}
  Let $m_i$ be the order of $f_i$ $(1 \le i \le n)$. Note that for $1 \le i,
  j \le n$, $0 < m_i/p_j \le v(\partial_{p_j}f_i)$. So by taking $a_i = m_i$
  and $b_i = 1/p_i$ ($1 \le i \le n$) in Proposition~\ref{p:det_val}, we have
  \begin{align*}
    \det\left( \partial_{p_j} f_i \right)\left( \prod_{k=1}^n \frac{m_k}{p_k}
    \right) &= \det\left( \partial_{p_j}f_i\left( \frac{m_i}{p_j}\right)
    \right)\\ = \det\left( v_{p_j}(m_i)f_i(m_i) \right) &= \left(\prod_{i=1}^n
    f_i(m_i) \right)\det\left( v_{p_j}(m_i) \right).
  \end{align*}
  The lemma follows since $f_i(m_i)$ is non-zero for each $i$.
\end{proof}

Lemma~\ref{l:dep_at_primes} was used to prove Theorem~7 in~\cite{obs}. It
states that a set of nonzero non-invertible arithmetic functions is
algebraically independent over $\Cc$ if the norms of its members are pairwise
relatively prime. Essentially the same proof yields a more general result:
\begin{thm}
  \label{th:no-trivial-prod} Suppose $W$ is a set of non-zero arithmetic
  functions whose orders are multiplicatively independent then $\Exp^*W$
  is algebraically independent over $\Cc$.
\end{thm}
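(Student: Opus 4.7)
The plan is to reduce to the case of finite $W$ and then invoke Theorem~\ref{th:gJac}, using the basic derivations $\partial_p$ and the Jacobian evaluation carried out in the proof of Lemma~\ref{l:dep_at_primes}. Since algebraic independence of $\Exp^*W$ is a statement about its finite subsets, and every such finite subset involves only finitely many functions $f_1,\ldots, f_n \in W$, one may replace $W$ by $W' := \{f_1,\ldots, f_n\}$: a smaller set admits at least as many iterated logarithms, so $k_{W'} \ge k_W$, and consequently every $\Exp^m f_i$ lying in $\Exp^*W$ also lies in $\Exp^*W'$. Thus it suffices to prove the theorem when $W = \{f_1,\ldots, f_n\}$ is finite; set $m_i := v(f_i)$.

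Next, I would translate the multiplicative independence of $m_1,\ldots, m_n$ into linear algebra. By unique prime factorization, $m_1,\ldots, m_n$ are multiplicatively independent in $\Nn$ if and only if their exponent vectors $(v_p(m_i))_{p \in \Pp}$ are $\Qq$-linearly independent in $\bigoplus_{p \in \Pp} \Qq$. Hence the integer matrix $(v_p(m_i))_{1 \le i \le n,\, p \in \Pp}$ has rank $n$, so one may choose $n$ distinct primes $p_1,\ldots, p_n$ such that the $n \times n$ minor $\det(v_{p_j}(m_i))$ is non-zero. Now applying Proposition~\ref{p:det_val} with $a_i = m_i$ and $b_j = 1/p_j$, exactly as in the proof of Lemma~\ref{l:dep_at_primes}, yields
\begin{align*}
\det(\partial_{p_j} f_i)\!\left(\prod_{k=1}^n \frac{m_k}{p_k}\right) = \Bigl(\prod_{i=1}^n f_i(m_i)\Bigr)\det(v_{p_j}(m_i)),
\end{align*}
which is non-zero because $f_i(m_i) \neq 0$ by the definition of order.

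In particular, $\det(\partial_{p_j} f_i) \neq 0$ as an arithmetic function, and Theorem~\ref{th:gJac} applied to the continuous basic derivations $\partial_{p_1},\ldots, \partial_{p_n}$ yields that $\Exp^*\{f_1,\ldots, f_n\}$ is algebraically independent over $\ker\{\partial_{p_1},\ldots, \partial_{p_n}\}$, which contains $\Cc$. The only substantive observation is the equivalence between multiplicative independence in $\Nn$ and $\Qq$-linear independence of prime-exponent vectors; once this is in hand, the choice of primes is routine linear algebra and everything else is the machinery already built in the paper, so there is no real obstacle.
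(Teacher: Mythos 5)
Your proof is correct and is essentially the paper's argument run in the forward direction rather than by contradiction: the paper assumes $\Exp^*W$ is algebraically dependent, invokes Lemma~\ref{l:dep_at_primes} for every choice of primes to conclude that the exponent vectors $(v_p(v f_i))_p$ are $\Qq$-linearly dependent, and contradicts multiplicative independence, whereas you translate multiplicative independence into $\Qq$-linear independence of exponent vectors up front, choose $n$ primes realizing a non-vanishing minor, and apply Theorem~\ref{th:gJac} directly. The ingredients (the reduction to finite $W$, the equivalence between multiplicative independence and linear independence of prime-exponent vectors, the determinant evaluation from Proposition~\ref{p:det_val}, and the Jacobian criterion) are identical, so this is the same proof cast in contrapositive form.
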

\begin{proof}
  Suppose on the contrary that $\Exp^*W$ is algebraically dependent over $\Cc$,
  then there exist $f_1,\ldots, f_n \in W$ such that
  \begin{align*}
    \Exp^*\{f_1,\ldots, f_n\}
  \end{align*}
  is algebraically dependent over $\Cc$. So for any choice of distinct
  primes $p_1,\ldots, p_n$, $\det\left( \partial_{p_j}f_i \right) = 0$
  by Theorem~\ref{th:gJac}, as a result $\det(v_{p_j}(vf_i)) = 0$
  by Lemma~\ref{l:dep_at_primes}. That means the set of vectors
  \begin{align*}
    \left\{
      \begin{pmatrix}
      v_p(v f_1) \\ \vdots \\ v_p( vf_n)
      \end{pmatrix}
      \colon p \in \Pp
    \right\}
  \end{align*}
  has $\Qq$-rank strictly less than $n$. Since it has the same $\Qq$-rank
  as the set
  \begin{align*}
    \{(v_p(v f_i))_{p \in \Pp} \colon 1 \le i \le n\},
  \end{align*}
  there exist $k_1, \ldots, k_n \in \Zz$ not all zero such that for each
  prime $p$,
  \begin{align*}
    0 = \sum_{i=1}^n k_iv_p(v f_i) = v_p\left( \prod_{i=1}^n (v f_i)^{k_i}
    \right).
  \end{align*}
  That means $\prod_{i=1}^n (v f_i)^{k_i} = 1$ contradicting the assumption
  that the orders $v(f_i)$ ($1 \le i \le n$) are multiplicatively independent.
\end{proof}
\begin{exa}
  \label{ex:e_n} By Theorem~\ref{th:no-trivial-prod} the set
  $\Exp^*\{e_{n_1},\dots, e_{n_k}\}$ is algebraically independent over $\Cc$
  if $v(e_{n_i}) = n_i$ ($1 \le i \le n$) are multiplicatively independent. The
  converse is also true and it follows easily from the fact that $e_m*e_n
  = e_{mn}$ for any $n,m \in \Nn$. Thus for a set of natural numbers $N$,
  the necessary and sufficient condition for
  \begin{align*}
    \Exp^*\{e_n \colon n \in N\}
  \end{align*}
  to be algebraically independent over $\Cc$ is that the elements of $N$ are
  multiplicatively independent. Note that Theorem~7 in~\cite{obs} alone does
  not imply this fact since there are multiplicatively independent numbers
  such as 2 and 6 that are not relatively prime.
\end{exa}

\section{$\mf_g$-Transcendence} \label{sec:mg-ind} 

In this section we will establish some criteria for algebraic independence of
images of a single arithmetic function under operators of the form $\mf_g$.
Let $\cB$ be a subalgebra of $\cA$, we say that an arithmetic function $f$
is {\bf $\mf_g$-transcendental} over $\cB$ if $\{\mf_g^j f \colon j \in J\}$
algebraically independent over $\cB$ where $J = \Nn \cup \{0\}$ if $\mf_g$
is not invertible, otherwise $J = \Zz$.

\begin{thm}
  \label{th:[supp f]>n} Let $f, g$ be arithmetic functions. Suppose $p_1,\dots,
  p_n \in [\supp f]$ such that $g(v(\partial_{p_j} f)p_j)$ $(1 \le j \le n)$
  are distinct and nonzero. Then for any $k \ge 0$, the set of functions
  \begin{align*}
    \Exp^*\{\mf_g^i f \colon k \le i \le k+n-1\}
  \end{align*}
  is algebraically independent over $\ker\{\partial_{p_1},\dots,
  \partial_{p_n}\}$. Moreover, if $g$ is nowhere vanishing then the same is
  true for any integer $k$.
\end{thm}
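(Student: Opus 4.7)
My plan is to apply the generalized Jacobian criterion (Theorem~\ref{th:gJac}) to the $n$ derivations $\partial_{p_1},\dots,\partial_{p_n}$ and the $n$ functions $\mf_g^{k}f,\dots,\mf_g^{k+n-1}f$; what remains is to check that the Jacobian
\[
\det\bigl(\partial_{p_j}\mf_g^{k+i-1}f\bigr)_{1\le j,i\le n}
\]
is nonzero in $\cA$.

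To do this I evaluate the Jacobian at $M:=\prod_j m_j$, where $m_j:=v(\partial_{p_j}f)$ (finite, since $p_j\in[\supp f]$ forces $\partial_{p_j}f\neq 0$). By Lemma~\ref{l:ord_inv}, $m_j\le v(\partial_{p_j}\mf_g^{k+i-1}f)$ whenever $\mf_g^{k+i-1}$ is defined, which is exactly the hypothesis required to invoke Proposition~\ref{p:det_val} with $a_j=m_j$ and $b_i=1$. The proposition then yields
\[
\det\bigl(\partial_{p_j}\mf_g^{k+i-1}f\bigr)(M)=\det\bigl(\partial_{p_j}\mf_g^{k+i-1}f(m_j)\bigr),
\]
and Equation~\eqref{eq:ord_pre} rewrites each entry on the right as $(g(m_jp_j))^{k+i-1}\partial_{p_j}f(m_j)$. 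Pulling $\partial_{p_j}f(m_j)\,(g(m_jp_j))^{k}$ out of row $j$ (each factor nonzero by choice of $m_j$ and by the nonvanishing hypothesis on $g(m_jp_j)$) exposes a Vandermonde matrix in the values $x_j:=g(m_jp_j)$, whose determinant $\prod_{i<j}(x_j-x_i)$ is nonzero precisely by the distinctness hypothesis.

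For the first assertion $(k\ge 0)$ the operator $\mf_g^{i}$ is automatically defined for every $i\ge k$, so the computation above is unconditional. For the ``moreover'' clause, the nowhere-vanishing assumption makes $g$ pointwise invertible, so $\mf_g^{i}$ is defined for every $i\in\Zz$; Lemma~\ref{l:ord_inv} still applies, the powers $(g(m_jp_j))^{k+i-1}$ remain nonzero even for negative $k$, and the same Vandermonde computation works verbatim. There is no genuine obstacle here: the hypotheses have been engineered so that the Jacobian criterion reduces, via Proposition~\ref{p:det_val} and a one-line Vandermonde factorization, exactly to the stated conditions on the scalars $g(v(\partial_{p_j}f)p_j)$.
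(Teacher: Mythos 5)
Your proof is correct and follows essentially the same route as the paper: both rely on Lemma~\ref{l:ord_inv} to bound orders, then reduce to the Vandermonde determinant $\det\bigl((g(m_jp_j))^{i-1}\bigr)$ via evaluation at $\prod_j m_j$. The only cosmetic difference is that the paper packages the Proposition~\ref{p:det_val}$\,+\,$Theorem~\ref{th:gJac} step into a single invocation of Theorem~\ref{th:m_j<=ord}, which you instead unfold explicitly.
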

\begin{proof}
  Let $f_i = \mf_g^i f$ ($k \le i \le k+n-1$) and $m_j = v(\partial_{p_j}f)$
  $(1 \le j \le n)$. By Lemma~\ref{l:ord_inv}, $m_j \le v(\partial_{p_j}
  f_i)$ for all $k \le i \le k+n-1$ and $1 \le j \le n$. So by
  Theorem~\ref{th:m_j<=ord} it suffices to show that
  \begin{align*}
    \det\left(\partial_{p_j}f_i(m_j)\right) & = \det\left(
    v_{p_j}(m_jp_j)(g(m_jp_j))^if(m_jp_j) \right)\\ &= \det\left(
    (g(m_jp_j))^{i-k} \right)\prod_j \partial_{p_j}f(m_j)(g(m_jp_j))^k
  \end{align*} 
  does not vanish. This is indeed the case because for each $j$, $m_j$ is the
  order of $\partial_{p_j}f$ hence $\partial_{p_j}f(m_j) \neq 0$ and $g(m_jp_j)
  \neq 0$ by our assumption on $g$; moreover $g(m_jp_j)$ $(1 \le j \le n)$
  are assumed to be distinct, so the last determinant is Vandermonde. Finally,
  nothing in the argument above prevents $k$ from being negative so long as
  $\mf_g^k$ is defined but that precisely requires $g$ to be nowhere vanishing.
\end{proof}

\begin{exa}
  \label{ex:1Q} Let $Q$ be a nonempty finite set of primes. Since for $q
  \in Q = [\supp \vec{1}_Q]$,
  \begin{align*}
    \log(v(\partial_q \vec{1}_Q)q) = \log(q)
  \end{align*}
  are all distinct and nonzero, it follows from Theorem~\ref{th:[supp f]>n}
  (by taking $g$ to be the real logarithm) that $\vec{1}_Q$ does not satisfy
  any differential algebraic equation with respect to $\partial_L$ of order
  less than $|Q|$ over the kernel of $\{\partial_q \colon q \in Q\}$.
\end{exa}

The assumption ``$g(v(\partial_{p_j} f)p_j)$ $(1\le j \le n)$ are distinct''
in Theorem~\ref{th:[supp f]>n} is necessary. Consider, for example, the
function $e_n$. It satisfies the following linear differential equation:
\begin{align*}
  \partial_L X - \log(n)X = 0
\end{align*}
and its support is generated by the prime divisors of $n$. Therefore,
the conclusion of Theorem~\ref{th:[supp f]>n} is false when $n$ has more
than one prime factor. Note also that for $f= e_n$ the assumption on $g$
in Theorem~\ref{th:[supp f]>n} cannot be met by any arithmetic function
since $v(\partial_p e_n)p = (n/p)p = n$ for all $p \in [\supp e_n]$. This
example also shows that the assumption ``$n_ip_i$ are distinct'' is needed
for Corollary~3.5 of~\cite{imaf}.

The following lemma is a rather simple observation about algebraic independence
of arithmetic functions over $\cS$. Since it will be called upon several times,
we include it here for the record.  For a set of primes $I$, let $\Delta_I$
be the set of basic derivations indexed by $I$, i.e. $\{\partial_p \colon
p \in I\}$. We write $\Delta_f$ for $\Delta_{[\supp f]}$.
\begin{lem}
  \label{l:cof2S} Let $I$ be a set of primes. If $E$ is a set of arithmetic
  functions that is algebraically independent over $\ker \Delta_J$ for
  any co-finite subset $J$ of $I$, then $E$ is algebraically independent
  over $\cS$.
\end{lem}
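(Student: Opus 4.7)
The plan is to argue by contradiction. Assume $E$ is algebraically dependent over $\cS$; then some finite tuple $h_1, \ldots, h_n$ drawn from $E$ satisfies a non-trivial polynomial relation $P(h_1, \ldots, h_n) = 0$ whose coefficients $s_1, \ldots, s_m$ lie in $\cS$. The target is to exhibit a co-finite subset $J \subseteq I$ such that each $s_i$ already belongs to $\ker \Delta_J$; the same relation will then be a non-trivial dependence of finitely many elements of $E$ over $\ker \Delta_J$, contradicting the hypothesis.

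Producing such a $J$ is where the defining property of $\cS$ enters. Each $s_i$ has finitely generated support, so $[\supp s_i]$ is finite, and the union $P_0 := \bigcup_{i=1}^{m} [\supp s_i]$ is still a finite set of primes. Setting $J := I \setminus P_0$ gives $I \setminus J \subseteq P_0$, so $J$ is co-finite in $I$. By the support/kernel criterion~\eqref{eq:supp_ker}, for every $p \in J$ and every $i$ we have $p \notin [\supp s_i]$, hence $\partial_p s_i = 0$. As this holds for all $p \in J$, each $s_i$ lies in $\ker \Delta_J$.

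With this $J$ in place, the relation $P(h_1,\ldots,h_n) = 0$ is a non-trivial polynomial dependence over $\ker \Delta_J$, contradicting the hypothesis applied to this particular $J$. The only thing worth checking is that ``algebraic independence over $\cS$'' is genuinely captured by finite relations: a single polynomial relation involves only finitely many coefficients, so the finiteness of each individual $[\supp s_i]$ suffices even though $\cS$ as a whole can have an infinite union of such supports. I do not anticipate a substantive obstacle — the lemma is essentially a packaging statement that lets later proofs reduce independence over $\cS$ to independence over the more tractable fields $\ker \Delta_J$ for co-finite $J$.
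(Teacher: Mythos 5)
Your proof is correct and is essentially the same argument as the paper's: both reduce to finitely many elements of $\cS$, observe that the union of their supports yields a finite set $H$ of primes, and invoke the hypothesis with the co-finite set $J = I \setminus H$, since every $\partial_p$ with $p \in J$ kills those finitely many elements. The only cosmetic difference is that you phrase it as a contradiction while the paper argues directly that $E$ is algebraically independent over every finitely generated subalgebra of $\cS$.
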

\begin{proof}
It suffices to show that $E$ is algebraically independent over every finitely
generated subalgebra of $\cS$. Suppose $\cH$ is a subalgebra of $\cS$ generated
by some $h_0,\ldots, h_d \in \cS$. Since the sets $[\supp h_i]$ ($0 \le i
\le d$) are finite so is their union $H$. Therefore, $E$, by assumption,
is algebraically independent over the kernel of $\Delta_{I\setminus H}$. We
can conclude that $E$ is algebraically independent over $\cH$ since each
derivation in $\Delta_{I \setminus H}$ kills every $h_i$ ($0 \le i \le d$).
\end{proof}

\begin{thm} \label{th:m_g-trans}
  Let $g \in \cA$ be eventually injective and $f \in \cA\setminus \cS$.
  The set of functions
  \begin{align*}
    \Exp^*\{\mf_g^i f \colon i \ge 0\}
  \end{align*}
  is algebraically independent over the kernel of any infinite subset of
  $\Delta_f$, and hence over $\cS$. In addition, if $g$ is nowhere vanishing,
  then $i$ can range through the integers.
\end{thm}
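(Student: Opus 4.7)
The plan is to reduce the statement to Theorem~\ref{th:[supp f]>n} by producing, inside any prescribed infinite subset $J$ of $[\supp f]$, arbitrarily many primes that verify its hypotheses. Since algebraic (in)dependence is a finitary matter, it is enough to show that for every $n\ge 1$ and every $k \ge 0$ the finite set $\Exp^*\{\mf_g^i f : k\le i \le k+n-1\}$ is algebraically independent over $\ker\Delta_J$. Observing that $\{\partial_{p_1},\dots,\partial_{p_n}\}\subseteq\Delta_J$ whenever the $p_j$ lie in $J$, one has $\ker\Delta_J \subseteq \ker\{\partial_{p_1},\dots,\partial_{p_n}\}$, so independence over the larger field supplied by Theorem~\ref{th:[supp f]>n} automatically yields independence over $\ker\Delta_J$.

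To apply that theorem, I need primes $p_1,\dots,p_n \in J$ for which the numbers $g(v(\partial_{p_j}f)\,p_j)$ are pairwise distinct and nonzero. For $p\in[\supp f]$ write $m_p := v(\partial_p f)\,p$, which equals the smallest multiple of $p$ lying in $\supp f$; in particular $m_p \ge p$. The key combinatorial point is that the fibres of the map $p \mapsto m_p$ are finite: if $m_p = v$, then $p$ must divide $v$, so $p$ is one of the finitely many prime divisors of $v$. Hence, as $p$ ranges over the infinite set $J$, the values $m_p$ form an infinite subset of $\Nn$ whose elements tend to infinity (since $m_p \ge p$).

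Now I invoke the hypothesis that $g$ is eventually injective: choose $N$ so that $g$ is injective on $\{m\ge N\}$. The infinitely many primes $p\in J$ with $p\ge N$ give rise to infinitely many distinct values $m_p \ge N$, on which $g$ is injective; consequently the numbers $g(m_p)$ are pairwise distinct, and injectivity forces $g$ to vanish at \emph{at most} one such $m_p$. After discarding that lone exceptional prime, I can select arbitrarily many $p_1,\dots,p_n\in J$ satisfying the hypotheses of Theorem~\ref{th:[supp f]>n}, completing the proof over $\ker\Delta_J$. The algebraic independence over $\cS$ is then immediate from Lemma~\ref{l:cof2S} applied to $I = [\supp f]$, which is infinite precisely because $f \notin \cS$, so every co-finite $J\subseteq I$ is infinite and the previous paragraph applies.

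For the final addendum, when $g$ is nowhere vanishing the operator $\mf_g^i$ is defined for every $i\in\Zz$ and Theorem~\ref{th:[supp f]>n} permits the lower endpoint $k$ to be any integer. Any finite subfamily of $\{\mf_g^i f : i\in\Zz\}$ is contained in some window $\{k,\dots,k+n-1\}$, so the same argument goes through verbatim. The only genuinely delicate step is the middle one, establishing that $p\mapsto m_p$ has infinite image on $J$ and that eventual injectivity of $g$ leaves at most one troublesome zero to discard; everything else is bookkeeping and invocation of prior results.
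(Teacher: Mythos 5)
Your proof is correct and follows essentially the same route as the paper's: reduce to finite windows, apply Theorem~\ref{th:[supp f]>n} after selecting primes in $J$ on which $g(v(\partial_p f)p)$ are distinct and nonzero, and finish with Lemma~\ref{l:cof2S}. The one (minor, clean) variation is how you guarantee distinct values $m_p := v(\partial_p f)p$: you observe that $p\mapsto m_p$ has finite fibres since $p\mid m_p$, whereas the paper instead builds an increasing sequence of primes greedily via the requirement $p_{j+1}>v(\partial_{p_j}f)p_j$; both accomplish the same task. One small point worth tightening in the write-up: from injectivity of $g$ past $N$ you get distinctness of $g(m_p)$ only after you have explicitly chosen primes with \emph{distinct} $m_p$-values (finite fibres give you infinitely many such), so that selection should precede the assertion that the $g(m_p)$ are pairwise distinct.
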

\begin{proof}
  Since $f \notin \cS$, $\Delta_f$ is infinite and so are its co-finite
  subsets. Let $J$ be an arbitrary infinite subset of $[\supp f]$, once we
  established that $E :=\Exp^*\{\mf_g^i f \colon i \ge 0\}$ is algebraically
  independent over $\ker \Delta_J$ then its algebraic independence over
  $\cS$ follows from Lemma~\ref{l:cof2S}. Since $g$ is eventually injective,
  there exists $n_0 \in \Nn$ such that $g$ is injective and non-vanishing on
  $\{n \in \Nn \colon n \ge n_0\}$. We choose an infinite sequence from $J$
  inductively as follows: pick $p_1 \in J$ larger than $n_0$ and $p_{j+1}
  \in J$ such that
  \begin{align*}
    p_{j+1} > v(\partial_{p_j} f)p_j\qquad (j \ge 1).
  \end{align*}
  Then $v(\partial_{p_j}f)p_j$ $(j \ge 1)$ form a strictly increasing sequence
  and so the $g(v(\partial_{p_j} f)p_j)$ are nonzero and distinct. Note that
  every finite subset of $E$ is contained in $\Exp^*\{\mf_g^i f \colon k \le i
  \le k+n-1\}$ for some $k \ge 0, n \ge 1$. According to Theorem~\ref{th:[supp
  f]>n}, the latter set is algebraically independent over $\ker\{\partial_{p_j}
  \colon 1 \le j \le n\} \supseteq \ker\Delta_J$. So we conclude that $E$
  is algebraically independent over $\ker \Delta_J$. In addition, if $g$
  is nowhere vanishing, then Theorem~\ref{th:[supp f]>n} and hence the whole
  argument goes through for $E:=\Exp^*\{\mf_g^i f \colon i \in \Zz\}$.
\end{proof}
Rather curiously, for a completely additive function $g$ to be injective
means the set of complex numbers $g(\Pp)$ is $\Qq$-linearly independent;
and for a completely multiplicative $g$ to be injective means $g(\Pp)$ is
multiplicatively independent. In any case, even if one requires $\mf_g$ in
Theorem~\ref{th:m_g-trans} to be a derivation or an automorphism of $\cA$,
there are still plenty arithmetic functions that satisfy the requirements.

\begin{exa} \label{ex:zeta-hypertrans} 
  By taking the function $g$ in Theorem~\ref{th:m_g-trans} to be the real
  logarithm, we conclude that $\vec{1}$ is $\partial_L$-transcendental
  (better known as {\bf hyper-transcendental}) over $\cS$. In particular,
  that means the Riemann zeta function $\zeta(s)$ is hyper-transcendental
  over $\Cc$. Lemma~3.1 in~\cite{aids} states that the identity function (of
  a complex variable $s$) is transcendental over the ring of complex functions
  (in $s$) defined by Dirichlet series which have a proper right half-plane of
  convergence. Thus we conclude that $\zeta(s)$ is hyper-transcendental over
  $\Cc(s)$. We refer the reader to~\cite{fibo} for some historical remarks
  of this result which is usually attributed to Hilbert~\cite{hilbert}
  in the literature.
\end{exa}

\begin{exa} 
  \label{ex:carlitz} For $k \in \Zz$, let $\mathbf{I}_k$ be the arithmetic
  function $n \mapsto n^k$. In~\cite{car}, Carlitz showed that $\mathbf{I}_k$
  ($k \ge 0$) are algebraically independent over $\Cc$. Shapiro and Sparer
  generalized this result to the algebraic independence of $\mathbf{I}_k\ (k \in
  \Zz)$ over the kernel of any infinite set of basic derivations (and hence over
  $\cS$)~\cite[Theorem~3.2]{aids}. By taking $g = \vec{I}$ the identity map of
  $\Nn$ and $f = \vec{1}$ in Theorem~\ref{th:m_g-trans}, we conclude more
  generally that $\Log \mathbf{I}_k, \mathbf{I}_k$ ($k \in \Zz$) are
  algebraically independent over the kernel of any infinite set of basic
  derivations (and hence over $\cS$).
\end{exa}

By fixing $f$ to $\vec{1}$, one can view Theorem~\ref{th:m_g-trans} as a
result about algebraic independence of, $g^{\gen{k}} := \mf_g^k(\vec{1})$
$(k \ge 0)$, the powers of $g$ with respect to the pointwise product. In
fact, since $[\supp \vec{1}] = \Pp$ and $v(\partial_p \vec{1}) =1$ for each
$p$, so an assumption weaker than eventual injectivity of $g$ is enough to
guarantee algebraic independence. More precisely, we have:
\begin{cor}
  \label{c:g^<k>} $\Exp^*\{g^{\gen{i}} \colon i \ge 0\}$ is algebraically
  independent over $\Cc$ if $g(\Pp)$ is infinite and is algebraically
  independent over $\cS$ if $g(I)$ is infinite for every infinite set
  of primes $I$. Moreover, the same is true with $i$ ranging through the
  integers if $g$ is nowhere vanishing.
\end{cor}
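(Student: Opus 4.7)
The plan is to specialize Theorem~\ref{th:[supp f]>n} and mimic the strategy of Theorem~\ref{th:m_g-trans}, exploiting the fact that for $f = \vec{1}$ we have $[\supp \vec{1}] = \Pp$ and $v(\partial_p \vec{1}) = 1$ for every prime $p$. Under this specialization, the quantity $g(v(\partial_{p_j}\vec{1})p_j)$ appearing in Theorem~\ref{th:[supp f]>n} collapses to $g(p_j)$, so the eventual-injectivity hypothesis used in Theorem~\ref{th:m_g-trans} is no longer needed: we only need to produce a sequence of primes $p_j$ on which $g$ takes distinct nonzero values.

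For the first assertion, suppose $g(\Pp)$ is infinite. Then infinitely many primes are sent to pairwise distinct nonzero values (at worst one value, namely $0$, must be discarded), so I can extract an infinite sequence $p_1, p_2, \ldots$ of primes with $g(p_j)$ distinct and nonzero. Any finite subset of $\Exp^*\{g^{\gen{i}} \colon i \ge 0\}$ is contained in $\Exp^*\{\mf_g^i\vec{1} \colon k \le i \le k+n-1\}$ for suitable $k \ge 0$ and $n \ge 1$. Applying Theorem~\ref{th:[supp f]>n} to $f = \vec{1}$ and the first $n$ primes of the sequence shows this finite subset is algebraically independent over $\ker\{\partial_{p_1}, \ldots, \partial_{p_n}\}$, which contains $\Cc$. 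The first claim follows.

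For the second assertion, suppose $g(I)$ is infinite for every infinite set of primes $I$. I would apply Lemma~\ref{l:cof2S} with the full set of primes. Given any co-finite $J \subseteq \Pp$, $J$ is itself infinite, so $g(J)$ is infinite and the construction of the previous paragraph carried out inside $J$ produces, for each finite subset of $\Exp^*\{g^{\gen{i}} \colon i \ge 0\}$, primes $p_1, \ldots, p_n \in J$ yielding algebraic independence over $\ker\{\partial_{p_1}, \ldots, \partial_{p_n}\} \supseteq \ker \Delta_J$. Lemma~\ref{l:cof2S} then upgrades this to algebraic independence over $\cS$. The moreover clause is obtained by the same arguments, since Theorem~\ref{th:[supp f]>n} allows $k$ to range over $\Zz$ whenever $g$ is nowhere vanishing. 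The only real obstacle is checking that the weakened hypothesis genuinely suffices, but the simplification $v(\partial_p\vec{1}) = 1$ removes the need to separate the quantities $v(\partial_{p_j}f)p_j$ as in the proof of Theorem~\ref{th:m_g-trans}, so the adaptation is essentially cosmetic.
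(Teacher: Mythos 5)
Your proposal is correct and follows essentially the same route the paper intends: specialize Theorem~\ref{th:[supp f]>n} to $f=\vec{1}$ (so $v(\partial_p\vec{1})p=p$ and the distinctness condition reduces to $g(p_j)$ distinct and nonzero), handle the $\Cc$-case by picking primes with distinct nonzero $g$-values from $g(\Pp)$, and upgrade to $\cS$ via Lemma~\ref{l:cof2S} exactly as in the proof of Theorem~\ref{th:m_g-trans}. The paper leaves this proof implicit, only remarking that $[\supp\vec{1}]=\Pp$ and $v(\partial_p\vec{1})=1$ permit the weaker hypothesis, which is precisely the observation your argument fleshes out.
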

Corollary~\ref{c:g^<k>}, in particular, implies if $g(\Pp)$ is infinite
then $g$ does not satisfy, in the algebra $(\cA, +, \cdot)$, any nontrivial
polynomial equation over $\Cc$. We will have another discussion about this
kind of independence in Section~\ref{sec:remarks}.

Let $(U_n)$ be a linear integral recurrence of order two, by that we mean
$(U_n)$ is a sequence of integers satisfying
\begin{align*}
  U_{n+2} = PU_{n+1} - QU_{n} \qquad (n \ge 1)
\end{align*}
for some $P,Q \in \Zz$ with $Q \neq 0$. Suppose $\rho$ is a ratio (the other
being $1/\rho$) of the two roots of the characteristic polynomial $z^2 -Pz + Q$.
Morgan Ward showed in~\cite[Theorem~1]{ward} that the set $\{U_n \colon n \ge
1\}$ has infinitely many prime divisors if either (1) $\rho$ is not a root of
unity, or (2) $\rho = 1$. In the first case, the recurrence $(U_n)$ is called
{\bf non-degenerate}. Thus, if $U \subseteq \Nn$ is the set of terms of a
non-degenerate second order linear integral recurrence, then $\vec{1}_U \notin
\cS$. By Theorem~\ref{th:m_g-trans}, we conclude that $\vec{1}_U$ is
$\mf_g$-transcendental over $\cS$ for any $g$ that is eventually injective.

\begin{exa} \label{ex:Fibo}
The linear recurrence defining the Fibonacci numbers: $F_1 = 1, F_2 =1$
and $F_{n+2} = F_{n+1} + F_n$ is second order and non-degenerate. Thus
$\vec{1}_F$, the indicator of function of the Fibonacci numbers, is
hyper-transcendental over $\Cc$. By an argument similar to the one given
in Example~\ref{ex:zeta-hypertrans}, we conclude that the Fibonacci zeta
function $\zeta_F(s)$ is hyper-transcendental over $\Cc(s)$.
\end{exa}

Our next result generalizes both~\cite[Theorem~3.3]{aids}
and~\cite[Theorem~3]{obs} by relaxing the assumption that $\supp f$ contains
infinitely many primes to that $\supp f$ is not finitely generated. The proof
below is a mixture of the those given in~\cite{aids} and~\cite{obs}.  Therefore,
our sole contribution here is the realization that these proofs remain valid in
a more general setting. We also hope our use of the lexicographic ordering on
the index set can clarify the presentation. In the following, $T^{\alpha}$
($\alpha \in \Cc$) stands for the operator $\mf_g$ where $g$ is the function $n
\mapsto n^{\alpha}$.

\begin{thm}
  \label{th:diff-diff} For any $f \in \cA \setminus \cS$ and any sequence
  $(\alpha_i)_{i \ge 0}$ of complex numbers with distinct real parts, the
  set of arithmetic functions
  \begin{align*}
    \Exp^*\{T^{\alpha_i} \partial_L^j f \colon i,j \ge 0\}
  \end{align*} 
  is algebraically independent over the kernel of any infinite subset of
  $\Delta_f$ and consequently over $\cS$.
\end{thm}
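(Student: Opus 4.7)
The plan is to mimic the proof of Theorem~\ref{th:m_g-trans}, reducing via Lemma~\ref{l:cof2S} and Theorem~\ref{th:gJac} to a Jacobian computation for each finite subfamily. Fix an arbitrary infinite $J \subseteq [\supp f]$. Any finite subset of $\Exp^*\{T^{\alpha_i}\partial_L^j f : i,j \ge 0\}$ lies in $\Exp^*T$ for some finite $T = \{T^{\alpha_{i_k}}\partial_L^{j_k} f : 1 \le k \le N\}$, so it is enough to produce $N$ distinct primes $p_1,\ldots,p_N \in J$ with
\[
\det\bigl(\partial_{p_\ell}(T^{\alpha_{i_k}}\partial_L^{j_k} f)\bigr)_{1 \le k,\ell \le N} \ne 0.
\]
Theorem~\ref{th:gJac} will then yield the algebraic independence of $\Exp^*T$ over $\ker\{\partial_{p_\ell}\} \supseteq \ker\Delta_J$, and Lemma~\ref{l:cof2S} will upgrade this to algebraic independence over $\cS$.

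Writing $T^{\alpha_i}\partial_L^j f = \mf_{h_{i,j}} f$ with $h_{i,j}(n) = n^{\alpha_i}(\log n)^j$, and using that $h_{i,j}(n) \ne 0$ for $n \ge 2$, the argument of Lemma~\ref{l:ord_inv} gives $v(\partial_p(T^{\alpha_i}\partial_L^j f)) = v(\partial_p f)$ for every prime $p$. Setting $n_\ell = v(\partial_{p_\ell} f)$ and $N_\ell = n_\ell p_\ell$, I would apply Proposition~\ref{p:det_val} with $a_k = 1$ and $b_\ell = n_\ell$ to evaluate the Jacobian at $\prod_\ell n_\ell$; the resulting entries simplify to $\partial_{p_\ell} f(n_\ell) \cdot N_\ell^{\alpha_{i_k}}(\log N_\ell)^{j_k}$. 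Since $\partial_{p_\ell} f(n_\ell) \ne 0$ by the definition of order, factoring these scalars out of each column reduces the problem to choosing distinct $p_1,\ldots,p_N \in J$ that make the generalized Vandermonde $\det\bigl(N_\ell^{\alpha_{i_k}}(\log N_\ell)^{j_k}\bigr)$ nonzero.

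The main obstacle is this final step, which I would resolve using the $\Cc$-linear independence on $(1,\infty)$ of the functions $\phi_{(i,j)}(x) = x^{\alpha_i}(\log x)^j$: since the $\Re\alpha_i$ are distinct, a leading-term asymptotic shows that any nontrivial linear combination of the $\phi_{(i,j)}$ is nonzero for all sufficiently large $x$ and hence has only finitely many zeros on $(1,\infty)$. I would then pick $p_1,\ldots,p_N$ inductively on $N$: given $p_1,\ldots,p_{N-1}$ for which the upper-left $(N{-}1)\times(N{-}1)$ minor is nonzero, cofactor expansion along the last column writes the full determinant as such a combination evaluated at $N_N$, in which the coefficient of $\phi_{(i_N,j_N)}$ equals the nonzero minor. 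Since only finitely many $p \in J$ have $n_p p$ in any prescribed finite subset of $\Nn$ (each such value having only finitely many prime divisors), all but finitely many $p \in J \setminus \{p_1,\ldots,p_{N-1}\}$ serve as $p_N$, closing the induction.
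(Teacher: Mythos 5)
Your proof is correct, and it diverges from the paper's at the one point where real work is needed: showing the generalized Vandermonde determinant $\det\bigl(N_\ell^{\alpha_{i_k}}(\log N_\ell)^{j_k}\bigr)$ can be made nonzero by a suitable choice of primes. Both arguments reduce to this via Lemma~\ref{l:cof2S}, Lemma~\ref{l:ord_inv}, and Theorem~\ref{th:m_j<=ord} (your invocation of Proposition~\ref{p:det_val} with $a_k=1$, $b_\ell=n_\ell$ is exactly Theorem~\ref{th:m_j<=ord}), but the paper then proceeds \emph{globally}: it orders the index set lexicographically, sorts the $\alpha_i$ by real part, and chooses the whole family of primes at once to grow super-exponentially, so that every off-diagonal term $t$ in the determinant expansion satisfies $|t/t_{\diag}| < (|L|!)^{-1}$ and the diagonal dominates. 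You instead proceed \emph{inductively on the number $N$ of functions}, choosing one prime at a time: cofactor expansion along the new column writes the $N\times N$ determinant as a $\Cc$-linear combination of $\phi_{(i,j)}(x)=x^{\alpha_i}(\log x)^j$ evaluated at $N_N$ whose coefficient of $\phi_{(i_N,j_N)}$ is the nonzero $(N{-}1)\times(N{-}1)$ determinant, and the distinctness of the $\Re\alpha_i$ forces any such nontrivial combination to be eventually nonzero, so almost every $p_N \in J$ works. Your route replaces the paper's explicit quantitative bound with a qualitative linear-independence argument; the induction is cleaner and does not require the primes to be ordered by size or the real parts to be sorted, at the cost of any explicit control on the primes. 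One small simplification: since $N_N = n_N p_N \ge p_N$, simply choosing $p_N$ larger than the threshold past which the combination is nonzero already suffices, and you can drop the auxiliary count-of-prime-divisors observation.
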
 
\begin{proof}
  Since $f \in \cA\setminus \cS$, $[\supp f]$ is infinite and so are its
  co-finite subsets. So by Lemma~\ref{l:cof2S}, we only need to show is that
  for any $k,m \ge 0$, the set of functions
  \begin{align*}
    \Exp^*\{f_{ij} \colon 0 \le i \le k, 0 \le j \le m \},
  \end{align*}
 where $f_{ij} := T^{\alpha_i}\partial_L^j f$, is algebraically independent
 over the kernel of any infinite subset of $\Delta_f$. Let
 \begin{align*}
   L = \{(a,b) \colon 0 \le a \le k, 0 \le b \le m\}
 \end{align*} 
 be the index set ordered lexicographically. If no confusion arise, we follow
 the convention of indexing matrix entries by writing the index $(a,b)$ as
 $ab$. 
 
 Given $J$ an infinite subset of $[\supp f]$, we are going to choose a
 sequence of primes $(p_{uv} \colon (u,v) \in L)$ from $J$. Let $m_{uv}$
 be the order of $\partial_{p_{uv}} f$. By applying Lemma~\ref{l:ord_inv}
 twice, we conclude $m_{uv} = v(\partial_{p_{uv}} f_{ij})$ for any $(i,j)
 \in L$. We claim that the determinant of the $|L| \times |L|$ matrix,
 \begin{align*}
   \left( \partial_{p_{uv}}f_{ij}(m_{uv}) \right) &= \left(\prod_{(u,v) \in
   L} \partial_{p_{uv}}f(m_{uv})\right)\left( (m_{uv}p_{uv})^{\alpha_i}(\log
   (m_{uv}p_{uv}))^j\right)
 \end{align*} 
 is non-zero if we impose suitable requirements on the sequence
 $(p_{uv})$. Once this is achieved, it then follows from
 Theorem~\ref{th:m_j<=ord} that the set of arithmetic functions
 $\Exp^*\{f_{ij} \colon (i,j) \in L \}$ is algebraically independent over
 $\ker\{\partial_{p_{uv}} \colon (u,v) \in L\} \supseteq \ker \Delta_J$.

 Since $\partial_{p_{uv}} f(m_{uv}) \neq 0$ for each $(u,v) \in L$, it
 suffices to make the determinant of the matrix 
 \begin{align*}
   P := \left( (m_{uv}p_{uv})^{\alpha_i}(\log(m_{uv}p_{uv}))^j \right)
 \end{align*}
 non-zero. To achieve that, first note that the entries of $P$ are all
 nonzero and hence each term in the expansion of $\det P$ is nonzero. By
 re-arranging the $\alpha_i$ ($1 \le i \le n$), if necessary, we can assume
 their real parts form a strictly increasing sequence. Let $t_{\diag}$
 denote the product of the diagonal entries of $P$, i.e.
 \begin{align*}
   t_{\diag} = \prod_{(u,v) \in L}
   (m_{uv}p_{uv})^{\alpha_u}(\log(m_{uv}p_{uv}))^{v}.
 \end{align*} 
 The key observation is that the ratio $t/t_{\diag}$, where $t$ is any other
 term in the expansion of $\det P$, has the form
 \begin{align*}
   \prod_{(u,v) \in L}
   (m_{uv}p_{uv})^{\gamma(u,v)}(\log(m_{uv}p_{uv}))^{d(u,v)}
 \end{align*} 
 and if $(u',v') \in L$ is the largest index such that $(\gamma(u',v'),
 d(u',v'))$ is not $(0,0)$, then $(\Re(\gamma(u',v')), d(u',v')) < (0,0)$
 lexicographically.  Therefore, if we choose $(p_{uv})$ an increasing sequence
 of primes from $J$ such that each $p_{uv}$ is sufficiently large compare to
 its predecessors, for example, pick $p_{00} \ge 3$ (to ensure $\log p_{uv}
 > 1$ for all $(u,v) \in L)$) and $p_{uv}$ such that
 \begin{align*}
   \log p_{uv} > 
   |L|! \prod_{(u',v') < (u,v)} (m_{u'v'}p_{u'v'})^{|\alpha_k|+m},
 \end{align*} 
 then $|t/t_{\diag}| < (|L|!)^{-1}$. Thus for such a choice of $(p_{uv})$,
 \begin{align*}
   |\det P| \ge |t_{\diag}|\left(1 - \sum_{t \neq t_{\diag}}
   |t/t_{\diag}|\right) > 0.
 \end{align*}
\end{proof}
A couple remarks about Theorem~\ref{th:diff-diff}. First, arithmetic functions
of the form $n^{\alpha_i}(\log n)^j f(n)$ ($j \in \Zz$) were considered
in both~\cite{aids} and~\cite{obs}. This is problematic for negative $j$
since these functions are not defined at $1$ and consequently their higher
convolution powers are undefined. Second, if Theorem~\ref{th:diff-diff}
admits an ``algebraic'' proof, by that we mean a proof similar to that of
Theorem~\ref{th:m_g-trans} which does not rely on the growth rate of the
functions involved, then one may expect a generalization about operators of
the form $\mf_h^i\mf_g^j$.

\begin{cor}
  \label{c:2ndrec} Suppose $U$ is a set of natural numbers with an infinite
  set of prime divisors, then $\zeta_U(s)$ does not satisfy any nontrivial
  algebraic differential difference equation over $\Cc(s)$.
\end{cor}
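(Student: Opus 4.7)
The plan is to deduce the corollary from Theorem~\ref{th:diff-diff} applied to $f = \vec{1}_U$, combined with the transcendence of $s$ over the ring of convergent Dirichlet series recalled in Example~\ref{ex:zeta-hypertrans}. Since $U$ has infinitely many prime divisors, $[\supp \vec{1}_U] = [U]$ is infinite and so $\vec{1}_U \in \cA \setminus \cS$. Under the isomorphism~\eqref{eq:isods}, the shift $s \mapsto s + \alpha$ corresponds to $T^{-\alpha}$ and $d/ds$ corresponds to $-\partial_L$, so each shifted derivative $\zeta_U^{(j)}(s+\alpha_k)$ is identified, up to a sign, with the arithmetic function $T^{-\alpha_k}\partial_L^j \vec{1}_U$.

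Suppose for contradiction that $\zeta_U(s)$ satisfies a nontrivial polynomial relation $P \in \Cc(s)[\{X_{k,j}\}_{1 \le k \le N,\, 0 \le j \le m}]$ evaluated on $\{\zeta_U^{(j)}(s+\alpha_k)\}$, for finitely many complex shifts $\alpha_1,\ldots,\alpha_N$ with distinct real parts. Clearing denominators and writing $P = \sum_\ell s^\ell Q_\ell$ with $Q_\ell \in \Cc[\{X_{k,j}\}]$, the hypothesis reads
\begin{align*}
  \sum_\ell s^\ell\, Q_\ell\bigl(\{\zeta_U^{(j)}(s+\alpha_k)\}\bigr) = 0
\end{align*}
as a holomorphic identity on a common right half-plane. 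Each coefficient $Q_\ell\bigl(\{\zeta_U^{(j)}(s+\alpha_k)\}\bigr)$ is itself a convergent Dirichlet series, and by Lemma~3.1 of~\cite{aids} the function $s$ is transcendental over this ring; hence every such coefficient must vanish identically as a Dirichlet series. Translating back through~\eqref{eq:isods}, each nonzero $Q_\ell$ would give an algebraic relation over $\Cc$ among the arithmetic functions $\{T^{-\alpha_k}\partial_L^j \vec{1}_U\}$. But applying Theorem~\ref{th:diff-diff} to $\vec{1}_U$ and the sequence $(-\alpha_k)$---whose real parts are still distinct---shows that this family is algebraically independent over $\cS$, hence over $\Cc$. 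So every $Q_\ell$, and in turn $P$, is zero, contradicting nontriviality.

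The main obstacle I foresee is the ``distinct real parts'' hypothesis baked into Theorem~\ref{th:diff-diff}. A priori an algebraic differential difference equation could involve shifts $\alpha_k$ sharing a common real part, a case the Vandermonde-type growth argument in the proof of Theorem~\ref{th:diff-diff} does not handle directly. The intended interpretation (in line with~\cite{aids} and~\cite{obs}) keeps the shifts well-separated---any real shifts qualify---so the argument above covers the result as stated. Accommodating arbitrary complex shifts of equal real part would require refining the estimate on $|\det P|$ to dominate oscillatory contributions of equal modulus, which is a genuinely new ingredient beyond what the earlier results supply.
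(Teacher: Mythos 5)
Your proof is correct and follows essentially the same route as the paper: $\vec{1}_U \notin \cS$ since $[U]$ is infinite, apply Theorem~\ref{th:diff-diff}, translate through the isomorphism~\eqref{eq:isods}, and invoke Lemma~3.1 of~\cite{aids} to pass from $\Cc$ to $\Cc(s)$. The caveat you raise about shifts sharing a real part is a fair observation about the scope of the statement rather than a gap in your argument: the paper's own Example~\ref{ex:ost} makes clear the shifts are intended to be distinct integers, which automatically have distinct real parts, so your treatment covers the result as actually meant.
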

\begin{proof}
  Since $\vec{1}_U \notin \cS$, Theorem~\ref{th:diff-diff} implies the set
  of arithmetic functions $\{T^{\alpha_i}\partial_L^j\vec{1}_U \colon i,j
  \ge 0\}$ is algebraically independent over $\Cc$ for any complex sequence
  $(\alpha_i)$ with distinct real parts. Since $(-1)^jT^{\alpha_i}\partial_L^j
  \vec{1}_U$ corresponds to $\zeta_U^{(j)}(s - \alpha_i)$ under the isomorphism
  in~\eqref{eq:isods}, the corollary is true over $\Cc$. Finally, by Lemma~3.1
  of~\cite{aids}, it is true for $\Cc(s)$ since the formal Dirichlet series
  involved are convergent.
\end{proof}
\begin{exa} 
  \label{ex:ost} Corollary~\ref{c:2ndrec} implies a classical result of
  Ostrowski \cite{ost}: $\zeta(s)$ does not satisfy any nontrivial algebraic
  differential difference equation over $\Cc(s)$. That means there is no
  non-zero polynomial $F(s, z_1,\ldots, z_k)$ over $\Cc$ such that the function
\begin{align*}
  F(s, \zeta^{(m_1)}(s-r_1), \ldots, \zeta^{(m_k)}(s-r_k)),
\end{align*} 
where $(m_i,r_i)$ are distinct pairs of integers and $m_i \ge 0$ for all $1
\le i \le k$, vanishes identically on its domain.
\end{exa}

\begin{exa}
  \label{ex:fibo} Recall that if $U \subseteq \Nn$ is the set of terms
  of a non-degenerate second order linear recurrences, then $\vec{1}_U
  \notin \cS$. Thus, Corollary~\ref{c:2ndrec} also implies the Fibonacci
  zeta function $\zeta_F(s)$ does not satisfy any nontrivial algebraic
  differential difference equation over $\Cc(s)$. Since it is not known
  whether the Fibonacci sequence contains infinitely many primes, this
  statement cannot be deduced, at least for now, from either Theorem~3.3
  of~\cite{aids} or Theorem~3 of~\cite{obs}.

  Many sequences of natural numbers, well-known to number theorists, are in fact
  non-degenerate second order integral linear recurrences (see~\cite{koshy} for
  a reference): Lucas sequence, Pell sequence and Pell-Lucas sequence, to name a
  few. Thus their zeta functions do not satisfy any non-trivial algebraic
  difference-differential equations over $\Cc(s)$. More generally, one can
  replace ``algebraic'' by ``holomorphic'' in the previous statement, if one
  invokes an analytic result of Reich~\cite[Satz~1]{reich} instead of
  Theorem~\ref{th:diff-diff}. This is the way in which
  Steuding~\cite[Theorem~1]{fibo} and Komatsu~\cite[Corollary~1]{komatsu}
  obtained the corresponding results for the Riemann zeta function and the Lucas
  zeta function, respectively.  In~\cite{fibo}, Steuding made no reference to
  Ward's paper~\cite{ward} but did mention that his argument obviously can be
  extended to other Dirichlet series that built from linear recurrence.
\end{exa}

\section{Remarks} \label{sec:remarks} 

We conclude with a few observations that we made along the way of studying
arithmetic functions. The first one is about derivations of $\cA$. As noted,
Theorem~\ref{th:gJac} will be an unconditional generalization of
Shapiro-Sparer's Jacobian criterion if every derivation of $\cA$ is continuous.
Unfortunately, we can neither prove that every derivation of $\cA$ is continuous
nor produce one that is not. There is indeed a construction given at the end of
Section~4 in~\cite[p.309--312]{conv} which produces nonzero derivations of $\cF$
that vanish on the $e_n$ ($n \in \Nn$) and hence $\cT$. Since $\cF$ is the field
of fractions of $\cA$, any such derivation must also be nonzero on $\cA$ but
then it cannot be continuous since $\cA$ is the closure of $\cT$ in $\cF$.
However, it is unclear to us that any derivation constructed this way actually
restricts to a map from $\cA$ to itself. Here we would like to offer a similar
but hopefully simpler way of constructing derivations $\cF$ that do not preserve
null sequences of $\cA$: start with a null sequence in $\cA$ that is
algebraically independent over $\Cc$, for example $(e_p)_{p \in \Pp}$.  Extend
it to a transcendence base $B$ of $\cF$ over $\Cc$. Then $db (b \in B)$ form a
$\cF$-basis of $\Omega_{\cF/\Cc}$~\cite[Theorem~26.5]{mat}. The derivation $D$
of $\cF$ obtained by composing $d$ with the $\Cc$-linear map determined by $db
\mapsto 1$ ($b \in B$) maps each $e_p$ to $1$ and hence cannot be a continuous
derivation of $\cA$ if it does restrict to a map from $\cA$ to itself. The flip
side of the coin is that every derivation of $\cA$ is continuous. This will be
true if the topology determined by the norm $\norm{\cdot}$ is equivalent to the
$\cI$-adic topology of some ideal $\cI$ of $\cA$. This is because for any $n \ge
1$, and $f \in \cI^n$, the derivative $f$ with respect to any derivation of
$\cA$, according to the Leibniz rule, is in $\cI^{n-1}$ and so any derivation of
$\cA$ is continuous with respect to the topology determined by any ideal of
$\cA$.  We should point out, however, in the case when $\cI$ is the unique
maximal ideal $\cA_0$ these two topologies are inequivalent. For example, none
of the term in the null sequence $(e_p)$ is even in $\cA_0^2$ because members of
$\cA_0^2$ vanish on every prime.

Our second observation is about linear independence of arithmetic functions over
$\Cc$. It was proved~\cite[Theorem~3.2--3.4]{imaf2} that arithmetic functions
$f_1,\ldots, f_n$ are linearly dependent over $\Cc$ if and only if their
Wronskian with respect to the log-derivation, i.e.
\begin{align*}
  \wL{f_1}{f_n}{n-1},
\end{align*}
vanishes identically. We claim that the same is true, more generally, for
elements of $\cF$ and offer a softer proof in the sense that no formula for the
values of Wronskian is needed. We take advantage of a standard result of
differential fields~\cite[Theorem~6.3.4]{afg} which asserts that elements of a
differential field $(F,D)$ are linearly dependent over the field of constants if
and only if their Wronskian with respect to $D$ (or $D$-Wronskian, in short) is
zero. Thus by taking the differential field to be $(\cF, \partial_L)$, all we
need to show is that the kernel of the log-derivation in $\cF$ is still $\Cc$.
Before proving that statement, it is probably worth pointing out that in general
$\ker_{\cF} D$ needs not be the fraction field of $\ker D$ in $\cF$: recall that
$\Omega$ counts the total number of prime factors with multiplicity of its
argument. One checks readily that $\ker \dO = \Cc$ and $\dO e_p = e_p$ for each
prime $p$. Thus for distinct primes $p$ and $q$, $e_p/e_q \in \ker_{\cF} \dO
\setminus \Cc$.
\begin{prop}
  $\ker_{\cF} \partial_L = \Cc$.
  \label{p:kernel}
\end{prop}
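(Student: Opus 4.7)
Given $h \in \cF$ with $\partial_L h = 0$, write $h = f/g$ with $f, g \in \cA$ nonzero; the equation becomes $g\partial_L f = f\partial_L g$ in $\cA$. The plan is to ``subtract off a constant'' from $h$ and use an order analysis of this identity. After possibly replacing $h$ by $1/h$ (which is also killed by $\partial_L$, since $\partial_L(1/h) = -\partial_L h/h^{2}$), I assume $v(f) \ge v(g) =: m$. Set $c = f(m)/g(m) \in \Cc$ and $\phi = f - cg$. A one-line computation, $g\partial_L\phi = g\partial_L f - cg\partial_L g = f\partial_L g - cg\partial_L g = \phi\partial_L g$, shows $\phi$ satisfies the same equation; and by construction $\phi(n) = 0$ for every $n \le m$, so either $\phi = 0$---in which case $h = c \in \Cc$ and we are done---or $v(\phi) > m$.

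Assuming $v(\phi) > m$, I will derive a contradiction from $g\partial_L\phi = \phi\partial_L g$ using the multiplicativity $v(u*w) = v(u)v(w)$ of the order together with the explicit formula $\partial_L u(n) = \log(n)u(n)$. When $m \ge 2$ one has $v(\partial_L g) = m$ and $v(\partial_L \phi) = v(\phi)$ (both bases being $\ge 2$), so each side of the equation has order $mv(\phi)$. Applying Lemma~\ref{l:prod_order} (or directly: the divisor constraints force $d = m$ on the left and $d = v(\phi)$ on the right), the convolutions at $n = mv(\phi)$ collapse to a single nonvanishing term on each side, giving
\[
  g(m)\log(v(\phi))\phi(v(\phi)) = \phi(v(\phi))\log(m)g(m),
\]
which forces $\log v(\phi) = \log m$, i.e.\ $v(\phi) = m$, contradicting $v(\phi) > m$.

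The boundary case $m = 1$ must be treated separately, because the identity $v(\partial_L x) = v(x)$ breaks down at $v(x) = 1$. Here $g(1) \neq 0$ so $\partial_L g(1) = 0$, which gives $v(\partial_L g) \ge 2$ (or $\partial_L g = 0$, in which case $g \in \Cc$ forces $\partial_L f = 0$ and hence $h \in \Cc$ outright); then $v(\phi\partial_L g) \ge 2v(\phi) > v(\phi) = v(g\partial_L \phi)$ already contradicts $g\partial_L \phi = \phi\partial_L g$. This bookkeeping at the $v = 1$ boundary is the only minor obstacle---once it is isolated, the remainder of the argument is a clean order calculation in the same spirit as Lemma~\ref{l:prod_order} and Proposition~\ref{p:det_val}.
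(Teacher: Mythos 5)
Your proof is correct and uses essentially the same strategy as the paper's: reduce to the convolution identity $g*\partial_L f = f*\partial_L g$, subtract a constant multiple to raise the order, and evaluate the identity at the product of the orders (Lemma~\ref{l:prod_order}) to force $\log v(\phi)=\log v(g)$. The only organizational difference is that the paper splits off the invertible-$g$ case at the outset (so $v(g)\ge 2$ is automatic) and first establishes $v(f)=v(g)$ symmetrically before subtracting, whereas you normalize $v(f)\ge v(g)$ via $h\mapsto 1/h$ and deal with the $v(g)=1$ boundary at the end; both variants are sound.
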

\begin{proof}
  First, $\ker_{\cF} \partial_L \supseteq \ker \partial_L = \Cc$. To establish
  the reverse inclusion, take any $f,g \in \cA\setminus\{0\}$ such that
  $\partial_L (f/g) = 0$ then
  \begin{align}
    \label{eq:quo} \partial_L f * g = f * \partial_L g.
  \end{align}
  If $g$ is invertible in $\cA$, $f/g \in \cA \cap \ker_{\cF}\partial_L =
  \Cc$. So let us assume $g$ is not invertible; that is $g(1) =0$, it then
  follows that $\norm{\partial_L g} = \norm{g} (> 0)$. Now by taking norm on
  both sides of~\eqref{eq:quo}, we see that $\norm{\partial_L f}=\norm{f}
  (> 0)$. Thus, by evaluating both sides of~\eqref{eq:quo} at $v(f)v(g)$,
  we conclude that $\log(v(f)) = \log(v(g))$ and hence $v(f) = v(g)$. Let $k$
  be this common value and $h$ be $f-\alpha g$ where $\alpha= f(k)/g(k)$. Then
  the order of $h$ is strictly greater than $k$ and $h/g = f/g- \alpha \in
  \ker_{\cF} \partial_L$. So unless $h=0$, i.e. $f/g = \alpha \in \Cc$,
  otherwise the same argument with $f$ replaced by $h$ will lead us to the
  contradicting conclusion that $v(h) = v(g) =k$. This completes the proof
  of the other inclusion.
\end{proof}

Viewing the linear independence result of~\cite{imaf2} as one about
differential fields frees us from focusing on the log-derivation: if the
Wronskian of $f_1,\ldots, f_n$ with respects to any derivation $D$ of $\cF$
is non-zero, then $f_1,\dots, f_n$ is linearly independent over $\ker_{\cF} D$
and hence over $\Cc$. Let us give an application. Recall that $g^{\gen{k}}$
($k \ge 0$) denotes the $k$-th power of $g$ with respect to the pointwise
product. Consider again the function $\Omega$. The value at 1 of the
$\partial_2$-Wronskian of $\vec{1}=\Omega^{\gen{0}}, \Omega^{\gen{1}},
\ldots, \Omega^{\gen{n}}$ is
\begin{align*}
  \det\left( \partial_2^j \Omega^{\gen{i}}(1) \right) &= \det\left( j!
  \Omega^{\gen{i}}(2^j) \right) =\det\left( j^i \right)\prod_{j=0}^n j!
\end{align*}
which is nonzero since the last determinant is Vandermonde. We conclude
that $\Omega^{\gen{k}}$ ($k \ge 0$) are linearly independent over
$\Cc$. Therefore, the $\partial_L$-Wronskian of $\vec{1}, \Omega^{\gen{1}},
\ldots, \Omega^{\gen{n}}$ must also be nonzero but this is harder to spot
since its value at 1 is 0. This also shows that $\Omega$ does not satisfy
any nontrivial polynomial equation over $\Cc$ in the $\Cc$-algebra $(\cA, +,
\cdot)$. Note that this fact cannot be deduced from Corollary~\ref{c:g^<k>}
since $\Omega(\Pp) = \{1\}$ is finite. Note also that this statement is
stronger than asserting $\Omega$ is transcendental over $\Cc$ in the sense of
Bellman and Shapiro~\cite{B-S}. Roughly speaking, since $(\cA,+, \cdot)$ is
not an integral domain, so the ``right'' definition for algebraic dependence
requires not just a nontrivial polynomial but an irreducible one to vanish
at the functions involved.

Our last few remarks are about Theorem~\ref{th:gJac} and Section~2
of~\cite{imaf2}. In searching for a generalization of the Jacobian criteria,
we realize that the derivations in Theorem~\ref{th:gJac} cannot be replaced
by differential operators. More precisely, consider, for each $k \in \Nn$,
the differential operator $\partial_k := \prod_{p} \partial_p^{v_p(k)}$
(here the product is composition of functions). One checks that for $f \in
\cA$ and $n \in \Nn$,
\begin{align*}
  (\partial_k f)(n) = f(kn) \prod_p\prod_{j =1}^{v_p(k)}\left( v_p(n)+j
  \right).
\end{align*}
In particular, $(\partial_k f)(1) = f(k)\prod_p (v_p(k)!)$. Thus, if we
normalize $\partial_k$ to
\begin{align*}
  \hat{\partial}_k = \left( \prod_p (v_p(k)!) \right)^{-1}\partial_k,
\end{align*}
then we will have $\varepsilon_1 \circ \hat{\partial}_k = \varepsilon_k$. To
see that Theorem~\ref{th:gJac} fails if we replace the derivations
by differential operators, take $f_1$ to be $\vec{1}_2$ and $f_2 =
f_1*f_1$. Note that
  \begin{align*}
    f_2(n) = 
    \begin{cases}
      k+1 & \text{if}\ n = 2^k\ \text{for some $k \ge 0$;} \\ 
      0   & \text{otherwise}.
    \end{cases}
  \end{align*}
  Certainly, $f_1$ and $f_2$ are algebraically dependent over $\Cc$ but
  \begin{align*}
    \det
  \begin{pmatrix}
    \hat{\partial}_2 f_1 & \hat{\partial}_4 f_1 \\
    \hat{\partial}_2 f_2 & \hat{\partial}_4 f_2
  \end{pmatrix}
  (1) = \det
  \begin{pmatrix}
    f_1(2) & f_1(4) \\
    f_2(2) & f_2(4)
  \end{pmatrix} 
  =
  \det
  \begin{pmatrix}
    1 & 1 \\
    2 & 3
  \end{pmatrix}
  = 1.
\end{align*}
Incidentally, this shows that Theorem~2.2 of~\cite{imaf2} is not true.
Moreover,
\begin{align*}
  \partial_L f_1(n) =
  \begin{cases}
    k\log(2) & \text{if}\ n = 2^k\ \text{for some $k \ge 0$;} \\ 0
    &\text{otherwise}.
  \end{cases}
\end{align*}
Thus $f_1$ satisfies the following differential algebraic equation over $\Cc$:
\begin{align*}
  \partial_L X = \log(2)(X^2-X).
\end{align*}
This falsifies Corollary~2.3--2.5 of~\cite{imaf2}. In particular, it is
not true that a Dirichlet series which is not a Dirichlet polynomial is
hyper-transcendental over $\Cc$. Corollary~2.6 and~2.7 of~\cite{imaf2} are
also problematic. Again the pair $(f_1,f_2)$ furnishes a counterexample to
Corollary~2.7 of~\cite{imaf2} which asserts that arithmetic functions with
infinite supports are algebraically independent over $\Cc$. Since $f_1,f_2$
are algebraically dependent over $\Cc$, so are the arithmetic functions
$g_1,g_2$ defined by
\begin{align*}
  \begin{pmatrix}
    g_1 \\ g_2
  \end{pmatrix}
  =
  \begin{pmatrix}
    f_1(2) & f_1(4) \\
    f_2(2) & f_2(4)
  \end{pmatrix}^{-1}
  \begin{pmatrix}
    f_1 \\ f_2
  \end{pmatrix}
  =
  \begin{pmatrix}
    \phantom{-}3 & -1 \\
    -2 & \phantom{-}1
  \end{pmatrix}
  \begin{pmatrix}
    f_1 \\ f_2
  \end{pmatrix}.
\end{align*}
Consequently, $h_1:=g_1-2$ and $h_2:=g_2+1$ are also algebraically dependent
over $\Cc$. Since the first four values of $h_1$ and $h_2$ are $(0,1,0,0)$
and $(0,0,0,1)$, respectively, the pair $(h_1,h_2)$ provides a counterexample
to Corollary~2.6 of~\cite{imaf2}.

{\bf Acknowledgements.} First of all, I would like to thank George Jennings
for a discussion that led me to realize my mistake in generalizing
Theorem~\ref{th:gJac}. I would also like to thank Alexandru Buium for
discussing with me the existence of non-continuous derivations of $\cA$. Our
conversations saved me from going down several dead-end paths. I thank
the referee for helpful suggestions on improving the presentation of
several results in this article, especially Theorem~\ref{th:Log_power},
Theorem~\ref{th:diff-diff} and Proposition~\ref{p:kernel}.

\section{Addendum} 

In this addendum, we argue an unconditional generalization of the Jacobian
criteria for arithmetic functions. The point being overlooked in the original
article is that even though there may be derivations of $\cA$ that are not
continuous with respect to the norm topology, as we remarked earlier, every
derivation of $\cA$ is continuous with respect to the $\mf$-adic topology. So
our original proofs will go through once we can demonstrate that convergence
for power series behave the same way in these topologies. Before we do so,
it is perhaps worth mentioning that for $\cA$ the $\mf$-adic topology is
strictly finer than the norm topology. This can be seen from the fact that
elements of $\mf^n$ have order at least $2^n$. Thus any sequence of elements
of $\cA$ that converges in the $\mf$-adic topology also converges in the norm
topology. The converse is not true as we pointed out (consider the sequence
$(e_p)_{p \in \Pp}$)in the previous section.

The situation is different for power series. Recall that for any $g \in \mf$
and any formal power series $\sum_{k=0}^\infty \alpha_kX^k$ over $\Cc$, the
sequence $\sum_{k=0}^m \alpha_k g^k$ $(m \in \Nn)$ converges in the norm
topology to a unique element of $\cA$. We denote it by $\sum_{k=0}^\infty
\alpha_k g^k$. In particular, for any $N \in \Nn$, $\sum_{k=0}^\infty
\alpha_{k+N}g^k$ is also a well-defined element of $\cA$. Since the convolution
production is continuous with respect to the norm topology,
\begin{align*}
  \sum_{k=0}^\infty \alpha_k g^k - \sum_{k=0}^{N-1} \alpha_kg^k
  = \sum_{k=N}^\infty \alpha_k g^k = g^{N}*\left(\sum_{k=0}^\infty
  \alpha_{k+N}g^k \right)
\end{align*}
is an element of $\mf^N$. Thus, we conclude that
\begin{prop}
Let $g \in \mf$ and $\sum \alpha_k X^k$ a formal power series over $\Cc$,
the sequence $\sum_{k=0}^m \alpha_k g^k$ $(m \in \Nn)$ converges to
$\sum_{k=0}^\infty \alpha_k g^k$ $\mf$-adically.  \label{p:m-conv}
\end{prop}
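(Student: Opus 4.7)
The plan is to show that for each $N \ge 1$ the partial sum $S_m := \sum_{k=0}^m \alpha_k g^k$ differs from the norm-limit $S := \sum_{k=0}^\infty \alpha_k g^k$ by an element of $\mf^N$ whenever $m \ge N-1$, which is exactly the definition of $\mf$-adic convergence of $(S_m)$ to $S$. The key observation is that one can factor $g^N$ out of the tail of the series and appeal to the fact that $\mf^N$ is an ideal.

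First, since $g \in \mf \subseteq \cA_0$, the norm-convergence criterion for power series already established in the paper makes $S$ a well-defined element of $\cA$; the same criterion applied to the shifted series $\sum \alpha_{k+N} X^k$ shows that $R_N := \sum_{k=0}^\infty \alpha_{k+N} g^k$ is also well-defined for every $N \ge 0$. Next, because the convolution product is continuous in the norm topology, I may pull $g^N$ past the norm-limit defining $R_N$:
\begin{align*}
g^N * R_N \;=\; \lim_{m \to \infty} \sum_{k=0}^m \alpha_{k+N}\, g^{k+N} \;=\; \sum_{k=N}^\infty \alpha_k g^k \;=\; S - S_{N-1}.
\end{align*}

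Now $g \in \mf$ implies $g^N \in \mf^N$, and since $\mf^N$ is an ideal of $\cA$, the product $g^N * R_N$ lies in $\mf^N$. Hence $S - S_{N-1} \in \mf^N$. Applying this with $N$ replaced by $m+1$ yields $S - S_m \in \mf^{m+1}$ for every $m \ge 0$, which is exactly $\mf$-adic convergence.

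The main obstacle is purely conceptual rather than technical: one must carefully avoid conflating the two topologies. The existence of $S$ and of the shifted sums $R_N$ is first secured in the norm topology, and only afterwards is the stronger $\mf$-adic conclusion extracted by factoring out $g^N$. No subtle estimate or auxiliary construction is needed; everything reduces to continuity of convolution in the norm topology together with the fact that $\mf^N$ absorbs convolution by any element of $\cA$.
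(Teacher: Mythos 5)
Your argument is correct and follows exactly the same route as the paper: both secure $S$ and the shifted sums $R_N$ in the norm topology, use continuity of convolution to write $S - S_{N-1} = g^N * R_N$, and then invoke $g^N \in \mf^N$ together with the ideal property of $\mf^N$ to conclude $\mf$-adic convergence. The only cosmetic difference is that you phrase the tail estimate for general $m \ge N-1$ before specializing, while the paper states it directly for $m = N-1$.
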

This proposition together with the fact that $\cA$ is $\mf$-adically separated,
i.e. $\bigcap_{n \in \Nn} \mf^n=\{0\}$, imply that Lemma~\ref{l:contd} holds
for any derivation of $\cA$ (same proof but with the norm topology replaced
by the $\mf$-adic topology). From this it follows that the assumption on
continuity with respect to the norm topology of the derivations involved
in all our results, including Ax's Theorem (Theorem~\ref{th:sa}) for $\cA$
and the Jacobian criterion for $\cA$ (Theorem~\ref{th:gJac}), can be removed.


\begin{thebibliography}{99}
  \bibitem{ax} Ax, James. On Schanuel Conjectures, Annals of Math., Second
  Series, Vol 93, No 2 (1971). pp.252--268.

  \bibitem{ant} Bateman, Paul T.; Diamond, Harold G. {\em Analytic number
  theory. An introductory course}. Monographs in Number Theory, 1. World
  Scientific Publishing Co. Pte. Ltd., Hackensack, NJ, 2004.

  \bibitem{B-S} Bellman, Richard; Shapiro, Harold N. The algebraic independence
    of arithmetic functions. I. Multiplicative functions. Duke Math. J. 15,
    (1948). 229--235.

  \bibitem{car} Carlitz, Lenoard. Independence of arithmetic functions,
    Duke Math. J. {\bf 19}, 1952, pp.65--70.

  \bibitem{rntf} Cashwell, E.D.; Everett, C.J. The ring of number-theoretic
    functions. Pacific J. Math. {\bf 9} 1959 pp.975--985.
  
  \bibitem{afg} Chambert-Loir, Antoine. {\em A Field Guide to Algebra},
    Undergraduate Texts in Mathematics. Springer-Verlag, New York, 2005.

  \bibitem{hilbert} Hilbert, David, Mathematische Probleme, Archiv f. Math. u.
    Physik. {\bf 1} (1901), 44--63, 213--317.

  \bibitem{komatsu} Komatsu, Takao, On continued fraction expansions of
  Fibonacci and Lucas Dirichlet series. Fibonacci Quart. 46/47 (2008/09), no. 3,
  268--278.

  \bibitem{imaf} Komatsu, Takao; Laohakosol, Vichian; Ruengsinsub, Pattira,
  Independence measures of arithmetic functions. J. Number Theory 131 (2011),
  no. 1, 1--17.

  \bibitem{imaf2} Komatsu, Takao; Laohakosol, Vichian; Ruengsinsub, Patira,
  Independence measures of arithmetic functions II, Acta Arith. 153 (2012), no
  2, 199--216.

  \bibitem{koshy} Koshy, Thomas, Fibonacci and Lucas numbers with applications.
  Pure and Applied Mathematics (New York). Wiley-Interscience, New York, 2001.

  \bibitem{mat} Matsumura, Hideyuki. {\em Commutative ring theory}. Translated
    from the Japanese by M.Reid. Second edition. Cambridge Studies in Advanced
    Mathematics, 8. Cambridge University Press, Cambridge, 1989.

  \bibitem{ost} Ostrowski, Alexandre, \"Uber Dirichletsche Reihen und
  algebraische Differentialgleichungen, Math. Zeitschr., Vol. 8, 1920,
  pp. 241--298.

  \bibitem{rearick} Rearick, David, Operators on algebras of arithmetic
    functions. Duke Math. J. 35, 1968 761--766.

  \bibitem{reich} Reich, Axel., Uber Dirichletsche Reihen und holomorphe
    Differentialgleichungen, Analysis 4 (1984), no. 1--2, 27–-44.

  \bibitem{obs} Ruengsinsub, Pattira; Laohakosol, Vichian; Udomkavanich,
    Patanee Observations about algebraic dependence of Dirichlet series. J.
    Korean Math. Soc. 42 (2005), no. 4, 695--708.

  \bibitem{conv} Shapiro, Harold N. On the convolution ring of arithmetic
    functions. Comm. Pure Appl. Math. 25 (1972), 287–-336.

  \bibitem{hint} Shapiro, Harold N., {\em Introduction to the Theory of
    Numbers}, Dover, 2008.

  \bibitem{aids} Shapiro, Harold N.; Sparer, Gerson H. On algebraic
  independence of Dirichlet series. Comm. Pure Appl. Math. 39 (1986), no. 6,
  695--745.

  \bibitem{fibo} Steuding J\"orn. The Fibonacci zeta-function is
    hypertranscendental. Cubo., Vol.10, No 03, 2008, pp. 133--136.

  \bibitem{ward} Ward, Morgan. Prime divisors of second order
    recurring sequences. Duke Math. J. 21, (1954). 607--614.

  \bibitem{miw} Waldschmidt, Michel. Schanuel's Conjecture: algebraic
  independence of transcendental numbers. Colloquium De Giorgi 2013 and 2014,
  Ed. U. Zannier; Edizioni della Normale 2015, p.129--137.
\end{thebibliography}
\end{document}